\newcommand{\ba}{\begin{array} }
\newcommand{\ea}{\end{array} }
\newcommand{\bae}{\begin{eqnarray}}
\newcommand{\eae}{\end{eqnarray}}
\newcommand{\bea}{\begin{eqnarray*}}
\newcommand{\eea}{\end{eqnarray*}}
\newcommand{\be}{\begin{equation}}
\newcommand{\ee}{\end{equation}}
\begin{document}

\markboth{Yun Kang and Dieter Armbruster }
{Noise and seasonal effects on the dynamics of plant-herbivore models with monotonic plant growth functions}

%
\catchline{}{}{}{}{}
%

\title{Noise and seasonal effects on the dynamics of plant-herbivore models with monotonic plant growth functions}

\author{Yun Kang}

\address{Applied Sciences and Mathematics, Arizona State University, \\Mesa, AZ 85212, USA.\\ 
\email{yun.kang@asu.edu} }

\author{Dieter Armbruster}

\address{School of Mathematical and Statistical Sciences, Arizona State University,\\ Tempe, AZ 85287-1804, USA. \email{armbruster@asu.edu}}

\maketitle

\begin{history}
\received{(Day Mth. Year)}
\revised{(Day Mth. Year)}
\end{history}
\begin{abstract}
{\bfseries Abstract.}\quad We formulate general plant-herbivore interaction models with monotone plant growth functions (rates). We study the impact of monotone plant growth functions in general plant-herbivore models on their dynamics. Our study shows that all monotone plant growth models generate a unique interior equilibrium and they are uniform {persistent} under certain range of {parameters} values. However, if the attacking rate of herbivore is too small or the quantity of plant is not enough, then herbivore goes extinct. Moreover, these models lead to noise sensitive bursting which can be identified as a dynamical mechanism for almost periodic outbreaks of the herbivore infestation. Montone and non-monotone plant growth models are contrasted with respect to bistability and crises of chaotic attractors. \end{abstract}
\keywords{Monotone growth models; uniformly {persistent}; Neimark-Sacker bifurcation; heteroclinic bifurcation; periodic infestations; bistability; noise bursting; crisis of chaos.}

\section{Introduction}
Interactions between plants and herbivores have been studied by ecologists for many decades. One focus of research is the effects of herbivores on plant dynamics \cite{Crawley1983}. In contrast, there is strong ecological evidence indicating that the population dynamics of plants has an important effect on the plant-herbivore interactions. In this article, we investigate how plants with different population dynamics contribute to the interactions. Models for plant growth vary strongly \cite{Crawley1990}:
Table \ref{mon_t1} lists eight discrete-time models of plant population growth. The first seven models are introduced in the paper by Law and Watkinson \cite{Watkinson1987} without inter-specific competition. All models are seasonal (discrete) models of the form $P_{t+1}  =P_t f(P_t)$, where $P_t$ is the density of a plant in season $t$ and $f(.)$ the per capita growth rate. In the absence of intra-specific competition the latter is given by $f(0)$, i.e. $1+q$ in models 1-3 and $w$ in models 4-8. The equilibrium density of the plant is given by $K$. The parameter $c$ is the space per plant at which interference with neighbors becomes appreciable \cite{Watkinson1987}. The interpretation of the power {parameter}, $b$, depends on the model. Generally, these models fall into two classes, depending on whether $P f(P)$ is a monotone function of $P$ or not. Models 1-3 are unimodal, i.e., they have a single hump. They lead to complicated dynamics including period doubling, period windows and chaos \cite{Guckenheimer1979}. Models 4-8 are monotone, leading to much simpler dynamics. Model 8 has a growth function of Holling-Type III \cite{Real1977}.

\begin{table}[ht]\label{mon_t1}
\begin{center}
\caption{Growth models of plant population density}
\begin{tabular}{|c|c|c|c|c|} \hline
Model &$f(P)$ &{Number of parameters} &$f(0)$ &equilibrium\\
\hline
1& $1+q - \frac{q P}{K}$&2&$1+q$ &$K$\\
\hline
2  &$e^{\ln{(1+q)}[1- \frac{P}{K}]}$&2&$1+q$&$K$\\
\hline
3& $e^{\ln{(1+q)}[1- \ln{(1+P)}]}$&2&$1+q$&$K$\\
\hline
4&$\frac{w}{1+c P}$&2&$w$&$\frac{w-1}{c}$\\
\hline
5&$\frac{w}{1+P^{b}}$&2&$w$&$(w-1)^{\frac{1}{b}}$\\
\hline
6&$\frac{w}{(1+P)^{b}}$&2&$w$&$w^{\frac{1}{b}}-1$\\
\hline
7&$\frac{w}{1+c P^{b}}$&3&$w$&$\frac{w^{\frac{1}{b}}-1}{c}$\\
\hline
\multirow{3}{*}{8}&&&&\\
                           &$\frac{w P^{b-1}}{1+ P^{b}}$&2&0 &positive roots of\\
                             &&&&$w P^{b-1} = 1+ P^b$\\
\hline
\end{tabular}
\end{center}
\end{table}
\noindent Notice that Model \#2 is the well known Ricker model \cite{Ricker1954} which is unimodal and usually written as
\bae\label{discreterick}
P_{t+1}&=& P_t e^{ r (1- \frac{P_t}{K})}
\eae
while Model \#4 is the Beverton-Holt model \cite{Beverton1957} usually written as
\bae\label{discretebh}
P_{t+1}&=&\frac{K P_t}{e^{-r}K + P_t (1- e^{-r})}
\eae
The dynamics of the Ricker model (\ref{discreterick}) has been well studied. It shows period-doubling, chaos and period windows. A plant-herbivore model with Ricer dynamics in plant has been studied in {\cite{Kang2008} (also} see similar models in \cite{Karen2007} and \cite{Kon2006}) showing many forms of complex dynamics.

\indent There are fair amount of literatures on seasonal (discrete) multi-species interaction or stage structure models (e.g., Abbott and Dwyer \cite{Karen2007}; Cushing \cite{Cushing2001}; Dhirasakdanon \cite{Dhirasakdanon2010}; Jang \cite{Jang2006}; Kang \emph{et al} \cite{Kang2008}; Kon \cite{Kon2006}; Roeger \cite{Roeger2010}; Salceanu \cite{Salceanu2009}; Tuda and Iwasa \cite{Tuda1998}), among which a few studies are related to discrete prey-predator (or host-parasite) interaction models (e.g., Jang \cite{Jang2006}; Kang \emph{et al} \cite{Kang2008}; Kon \cite{Kon2006}). Tuda and Iwasa {\cite{Tuda1998}} developed scramble-type and {contest-type} models to examine an evolutionary shift in the mode of competition among the bean weevils. Jang {\cite{Jang2006}} studied a discrete-time Beverton-Holt stock recruitment model with Allee effects. Kang \emph{et al} {\cite{Kang2008}} and Kon {\cite{Kon2006}} studied a discrete plant-herbivore (or host-parasite) interaction model with Ricker dynamics as the growth function of plant (or host in Kon \cite{Kon2006}). In this article, we investigate the impact of general monotone plant growth models on the dynamics of plant-herbivore interaction. Our study is different from others and our results are new. We show that all monotone plant growth models generate a unique interior equilibrium (Theorem \ref{intercept1}) and they are {uniformly persistent} (see related definitions in \cite{Smith_Thieme2005}) for certain range of {parameters} values (Theorem \ref{persistent}). If the attacking rate of herbivore is too small or the quantity of plant is not enough, then herbivore goes extinct (Theorem \ref{1th2}). In addition, our numerical simulations suggest that these models lead to noise sensitive bursting which can be identified as a dynamical mechanism for almost periodic outbreaks of the herbivore infestation.

\indent The rest of paper is organized as follows. In Section 2 we define two classes of monotone dynamics of single plant species. In Section 3 we formulate general plant-herbivore models for the plant dynamics introduced in Section 2. In Section 4 we analyze the dynamic behavior of these two general models, e.g. the global stability of the boundary equilibrium and {uniform persistence} of these models. In Section 5 we apply the theoretical results from Section 4 to a Beverton-Holt model and a Holling-Type III model. The analysis and numerical simulations suggest that Beverton-Holt model goes through Neimark-Sacker bifurcation with unique periodic orbit for a certain set of {parameters} values; while Holling-Type III model goes through heteroclinic bifurcation for a certain set of {parameters} values. Our study also shows that noise is an important factor for outbreak of herbivore. Finally, we compare monotone plant growth models to unimodal and multimodal plant growth models regarding their influence of {plant-herbivore} dynamics.
\section{Monotone growth dynamics for a single plant species}
Consider
\be
\label{plant1}
P_{t+1}=P_{t} f(r, P_t)=F(r, P_t), \hspace{2pt}t \geq 0.
\ee
where $P_t$ is the density of biomass in plant at generation $t$; {$F(r, P_t)$ is the growth function of biomass density} and $f(r, P_t)$ is the per capita growth rate of the biomass density. Without intra-specific competition, we have $f(r,0)=r$, i.e., $r$ is the maximal per capita growth rate of the plant. This simple formulation (\ref{plant1}) can give rise to a great diversity of dynamical behavior, depending on the expression used for the growth function $f(r,\cdot)$ and the values given to the parameters of that function. Several different functions have been considered. See \cite{Cohen1995} for a partial list of models with per capita growth rates that decline with increasing population density:
\be
\label{plant2}
{\frac{\partial f(r,P)}{\partial P} < 0, \hspace{2pt} P \geq 0.}
\ee
\noindent In biological terms, this means that the per capita growth rate {$f(r,P)$} decreases due to negative density dependent mechanism such as intra-specific competition between individuals within a population. {For convenience, we use $F(P), f(P)$ instead of $F(r, P), f(r,P)$ since $r$ is a fixed parameter.} The well known prototypes of the model (\ref{plant1}) under this biological assumption are the Beverton-Holt and Ricker models. The dynamics of Ricker model has been extensively studied (e.g., \cite{Ricker1954}; \cite{Kang2008}; \cite{Kon2006}). Here, we focus on the Beverton-Holt prototype, i.e., the dynamics of the plant is monotonically increasing,
\be
\label{plant3}
{F'(P)=\frac{d\,F(P)}{d\,P} \geq 0, \hspace{2pt} P \geq 0.}
\ee
{We can characterize the growth models of a single plant with assumption \textbf{H1} or  \textbf{H2} or both \textbf{H1} and \textbf{H2}: }
\begin{description}
\item{\textbf{H1}: $F(0)=0, \hspace{2pt} F(P)|_{P>0}> 0, \hspace{2pt} {F'(P)} >0,  \hspace{2pt}\mbox{and } \lim_{P \rightarrow +\infty} F(P) = C>0$.}
\item{\textbf{H2}: $f(P)|_{P\geq 0}\geq 0, \hspace{2pt} {f'(P)} < 0, \hspace{2pt} \mbox{and }\lim_{P \rightarrow +\infty} f(P) = 0$}.
\end{description}

\noindent In the biological sense, \textbf{H1} implies that the population density in one year is a increasing function {$F(P)$} of the density in the previous year and its per capita growth function {$f(P)$} may be increasing or decreasing or both, which implies that plant suffers from the extremes of contest intraspecific competitive interaction (Henson and Cushing \cite{Henson1996}); \textbf{H2} implies that the per capita growth function of the plant is a decreasing function due to intra-specific competition and the population density of a plant can be an increasing or decreasing function or both with respect to its density, which implies that plant suffers from the extremes of scramble intraspecific competitive interactions (Henson and Cushing \cite{Henson1996}). {In this article, we study the population dynamics associated with plants that satisfy  \textbf{H1} or \textbf{H2} or both \textbf{H1} and \textbf{H2}. The specific assumptions will be addressed in the models.} The following proposition summarizes the dynamics of plant in the absence of herbivore.
\begin{proposition}\label{p_single}
\begin{enumerate}
\item Assume that \textbf{H1} holds and there are $n+1$ consecutive, distinct and non-degenerate solutions ${\bar{P}^i}, i=0,1,...,n$ of $P = F(P)$ with the following property {$$0=\bar{P}^0<\bar{P}^1<\cdot\cdot\cdot<\bar{P}^{n}.$$} If ${\bar{P}^0}$ is stable (unstable) then the even ${\bar{P}^{i}}$ are stable (unstable) while the odd ${\bar{P}^{i}}$ are unstable (stable). In particular, ${\bar{P}^n}$ is always stable. Moreover, define the map $P_{t+1}=F(P_t)$, then for any $\epsilon>0$, there exists $N$ large enough, such that for all $t>N$, we have $$P_{t+1}=F(P_t)\leq {\bar{P}^n} + \epsilon.$$
\item Assume that  \textbf{H2} holds, then $P = P f(P)$ has at most two roots, i.e., $P=0$ and {the possible root of $1 = f(P)$}.
\end{enumerate}
\end{proposition}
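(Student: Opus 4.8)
The plan is to handle the two parts separately, since they exploit different structure. For part (1) I would first introduce $g(P) = F(P) - P$, so that the equilibria $\bar{P}^i$ are exactly the zeros of $g$, and ``non-degenerate'' means precisely that each is a simple zero, $g'(\bar{P}^i) = F'(\bar{P}^i) - 1 \neq 0$. Since \textbf{H1} forces $F'(P) > 0$, the multiplier $F'(\bar{P}^i)$ at every fixed point is positive, so local stability of the map $P_{t+1} = F(P_t)$ is equivalent to $F'(\bar{P}^i) < 1$, i.e. to $g'(\bar{P}^i) < 0$, and instability to $g'(\bar{P}^i) > 0$. The entire stability statement therefore reduces to tracking the sign of $g'$ along the consecutive zeros.

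The key step is an alternation argument. On each interval $(\bar{P}^i, \bar{P}^{i+1})$ between consecutive zeros the continuous function $g$ keeps a fixed sign; because the zeros are simple, this forces the sign of $g'$ to flip from one zero to the next (if $g > 0$ on the interval then $g$ rises through $\bar{P}^i$ and falls through $\bar{P}^{i+1}$, and symmetrically when $g < 0$). Hence the stability type alternates in $i$: the equilibria sharing the parity of $\bar{P}^0$ inherit its type and the others carry the opposite type. To pin down $\bar{P}^n$ I would use boundedness from \textbf{H1}: since $F(P) \to C$, we have $g(P) \to -\infty$, and as $\bar{P}^n$ is the largest zero, $g < 0$ on $(\bar{P}^n, \infty)$; the one-sided slope then gives $g'(\bar{P}^n) < 0$, so $\bar{P}^n$ is stable.

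For the asymptotic bound I would use that $F$ is monotone increasing. Then $[0, \bar{P}^n]$ is forward invariant, because $F$ increasing and $F(\bar{P}^n) = \bar{P}^n$ give $F([0,\bar{P}^n]) \subseteq [0,\bar{P}^n]$; and for any $P_0 > \bar{P}^n$ one has $\bar{P}^n < F(P_0) < P_0$, so the orbit decreases monotonically and is bounded below by $\bar{P}^n$, hence converges down to $\bar{P}^n$. In both cases $\limsup_{t\to\infty} P_t \leq \bar{P}^n$, which yields $F(P_t) = P_{t+1} \leq \bar{P}^n + \epsilon$ for all sufficiently large $t$.

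Part (2) is immediate: rewriting $P = P f(P)$ as $P(1 - f(P)) = 0$ shows every root is either $P = 0$ or a solution of $f(P) = 1$, and \textbf{H2} makes $f$ strictly decreasing, so $f(P) = 1$ has at most one root; together with $P = 0$ this gives at most two. I expect the only delicate point to be making the alternation airtight at the two end equilibria, so I would first fix the behavior of $g$ near $P = 0$ (from $F(0) = 0$ and $F > 0$ on $(0,\infty)$) and near infinity (from $F \to C$) before invoking the generic alternation on the interior intervals.
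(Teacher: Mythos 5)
Your proof is correct. For the asymptotic bound in part (1) and for part (2) it coincides with the paper's own argument: the paper likewise shows $[0,\bar{P}^n]$ is forward invariant via monotonicity of $F$ and induction, shows that for $P_0>\bar{P}^n$ the orbit decreases monotonically to $\bar{P}^n$ because $F(P)<P$ beyond the largest fixed point, and settles part (2) by observing that $1=f(P)$ has at most one solution since $f$ is strictly decreasing. Where you genuinely differ is the stability analysis. The paper handles the alternation claim by pointing at the staircase diagrams in its figure, and justifies the stability of $\bar{P}^n$ by asserting that \textbf{H1} implies $0<\lim_{P\to+\infty}F(P)=C<1$ and $0<F'(\bar{P}^n)<1$ --- an assertion that does not follow from \textbf{H1} as stated ($C<1$ is not among its hypotheses, and the paper's own Beverton--Holt application with $r>1$ has $C=r>1$). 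Your route through $g(P)=F(P)-P$ repairs exactly this: simplicity of the zeros plus the fixed sign of $g$ on each interval between consecutive zeros forces the sign of $g'$ to alternate; positivity of $F'$ identifies stability with $g'(\bar{P}^i)<0$ (the multiplier lies in $(0,1)$) and instability with $g'(\bar{P}^i)>0$; and boundedness of $F$ gives $g(P)\to-\infty$, hence $g<0$ on $(\bar{P}^n,\infty)$ and $g'(\bar{P}^n)<0$, so $F'(\bar{P}^n)\in(0,1)$ and $\bar{P}^n$ is stable. This is a self-contained analytic proof of precisely the claims the paper only illustrates, and it shows as a byproduct that the sign pattern forces the parity of $n$ to be consistent with the stability type of $\bar{P}^0$, so the two assertions of part (1) cannot conflict. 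One small polish: where you say the decreasing orbit ``converges down to $\bar{P}^n$'', add the one-line justification that the limit is a fixed point of $F$ by continuity and that $\bar{P}^n$ is the only fixed point in $[\bar{P}^n,\infty)$.
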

\begin{proof}:
Possible configurations of the staircase diagrams of Fig. \ref{sink_source_fig} shows the alternating stable and unstable equilibria. {If  \textbf{H1} holds, then we have $$0<\lim_{P \rightarrow +\infty} F(P) = C<1\mbox{ and }0<{F'(\bar{P}^n)}<1.$$ This implies that the largest equilibrium $\bar{P}^n$ is locally stable.}
\begin{figure}[ht]
\centering
\includegraphics[width=0.8\textwidth]{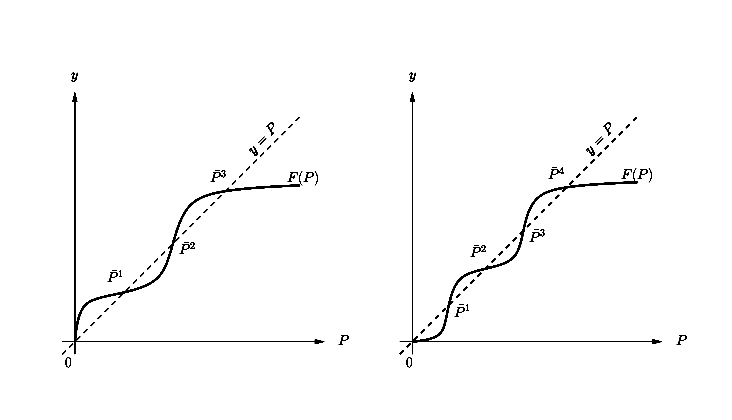}
\caption{Possible configurations of the staircase diagrams}
\label{sink_source_fig}
\end{figure}

\indent {If an initial condition $P_0$ satisfies $P_0\leq {\bar{P}^n}$, then \textbf{H1} implies that $$P_1=F(P_0)\leq F(\bar{P}^n)=\bar{P}^n$$ and by induction, $P_t\leq \bar{P}^n$ for all $t\geq 1$. In the case that the initial condition is larger than $\bar{P}^n$, i.e., $P_0> \bar{P}^n$, then \textbf{H1} and the fact that $\bar{P}^n$ is the largest positive root of $P=F(P)$ indicate that $F(P)<P$ for all $P>\bar{P}^n$. Thus, we have follows
$$\bar{P}^n=F(\bar{P}^n)\leq P_1=F(P_0)<P_0 .$$ Therefore, by induction, we know that the sequence $\{P_t\}_{t=0}^\infty$ is decreasing and converges to $\bar{P}^n$ as $t\rightarrow \infty$. This indicates that for any $\epsilon>0$, there exists $N$ large enough, such that for all $t>N$, we have $P_{t+1}=F(P_t)\leq \bar{P}^n + \epsilon$. In other cases, we have $P_{t+1}=F(P_t)< {\bar{P}^n}$ for all $t>0$.} 

\indent Since $f(P)$ is a differentiable and strictly decreasing function of $P$, thus $1= f(P)$ has at most one solution. Therefore, the statement holds.
\end{proof}

\section{Plant-herbivore models}
Insect and plant survival rates often appear to be non-linear functions of plant and insect density,
respectively (\cite{Harper1977}; \cite{Crawley1983}). In our discrete-time models, we therefore assume that the plant
population growth is a non-linear function of herbivore and plant density, and that plant population growth decreases
gradually with increasing herbivore {density}. Similarly, we assume that the density of herbivore population
depends on both the plant and herbivore's density rather than only the herbivore density \cite{Crawley1983}. A final key feature of many plant-herbivore interactions is that, in the absence of the herbivore, we have a monotone growth dynamics as discussed in the previous section.

\indent Let $P_t$ represent the density of edible plant biomass in generation $t $ and $H_t$ represent the population density of herbivore.
The effect of the herbivore on the plant population growth rate is described by the function
$g(a,H_t)$ with $g(a, 0)=1$. Here the parameter $a$ measures the damage caused by herbivore, e.g., feeding rate.
We assume that the herbivore population density is proportional to a function of plant density $h(P_t)$ and a non-linear function of herbivore density $l(H_t)$. Therefore, the structure of our models is:
\bae\label{mon_ph1}
P_{t+1} &=& P_t f(P_t) g(a,H_t) \\
\label{mon_ph2}
H_{t+1} &=& h(P_t) l(H_t)
\eae
Many consumer-resource models assume a non-linear relationship between resource population size and
attack rate (\cite{Beddington1975}; \cite{Tang2002}). For plants and insect herbivores, we similarly expect a non-linear functional relationship, due to herbivore foraging time and satiation. The relationship is expressed in terms of plant biomass units rather than population size, because herbivores are unlikely to kill entire plants (\cite{Harper1977}; \cite{Crawley1983}).

\indent Our model has the following features: Without the herbivore, {we assume a monotone growth rate, i.e., {\bf H1}} holds. The growth function $F(P_t)$ determines the amount of new leaves available for consumption for the herbivore in generation $t$. We assume that the herbivores search for plants randomly. The area consumed is measured by the parameter $a$, i.e., $a$ is a constant that correlates to the total amount of the biomass that an herbivore consumes. The herbivore has a one year life cycle, the larger $a$, the faster the feeding rate. {After attacks by herbivores, the biomass in the plant population is reduced to 
\bae\label{ph4}
P_{t+1} &=& P_t f(P_t) e^{-a H_t}
\eae where $g(a, H_t)$ in \eqref{mon_ph1} is defined as
\bae\label{ph3}
g(a,H_t) &=& e^{-a H_t}.
\eae}  

\indent The term $h(P_t)$ {in \eqref{mon_ph2}} describes how the biomass in the plants is converted to the biomass of the herbivore. It differs depending on the relative timing of herbivore feeding and growth. If the herbivore
attacks the plant before the plant grows, then we have
$h(P_t) = P_t$, otherwise, $h(P_t) = P_t f(P_t)$.
Since the biomass of herbivore comes from whatever they eat, $h(P_t)$ is the available biomass of a plant that can be
converted into the herbivore's biomass. The term $l(H_t)$ describes the fraction of $h(P_t)$ that can be used by the herbivore, i.e.,
$l(H_t) =1- e^{-a H_t}$.
Therefore, the evolution of the plant-herbivore system is either described by
\textbf{Model I :}
\bae\label{ph10}
P_{t+1} &=& F(P_t)e^{-a H_t} \\
\label{ph11}
H_{t+1} &=& P_t  \left[1- e^{-aH_t}\right]
\eae
describing the dynamics of a system where the plant is attacked before it has a chance to grow while
\textbf{Model II :}\\
\bae\label{ph12}
P_{t+1} &=& F(P_t)e^{-a H_t} \\
\label{ph13}
H_{t+1} &=& F(P_t)\left[1-e^{-a H_t}\right]
\eae
describes the dynamics when the plant grows first before being attacked.

\section{Mathematical analysis}
First, we can easily see that {$\mathbb R^2_+$ is positively invariant for both Model I and II}. In addition,
\begin{lemma}\label{bounded}
If \textbf{H1} holds, then $\limsup_{t\rightarrow \infty}\max\{P_t, H_t\}\leq \bar{P}^n$ for both Model I and II.
\end{lemma}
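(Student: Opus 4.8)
The plan is to treat the two coordinates separately and reduce everything to the single-species estimate already established in Proposition \ref{p_single}(1), which asserts that the pure plant map $Q_{t+1}=F(Q_t)$ satisfies $\limsup_{t\to\infty} Q_t \le \bar{P}^n$. First I would dispose of the plant component, which is governed by the same equation in both models. On the invariant region $\mathbb{R}^2_+$ we have $a>0$ and $H_t\ge 0$, so $e^{-aH_t}\le 1$ and therefore $P_{t+1}=F(P_t)e^{-aH_t}\le F(P_t)$ in Model I and Model II alike. I would then compare $\{P_t\}$ with the single-species orbit started from the same data, $Q_0:=P_0$, $Q_{t+1}:=F(Q_t)$. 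Since $F$ is strictly increasing by \textbf{H1}, a one-line induction gives $P_t\le Q_t$ for all $t$: if $P_t\le Q_t$ then $P_{t+1}\le F(P_t)\le F(Q_t)=Q_{t+1}$. Combining this domination with Proposition \ref{p_single}(1) yields $\limsup_{t\to\infty}P_t\le\limsup_{t\to\infty}Q_t\le\bar{P}^n$ for both models.

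Next I would bound the herbivore component using the plant estimate just obtained. For Model I the inequality $1-e^{-aH_t}\le 1$ gives $H_{t+1}=P_t\left[1-e^{-aH_t}\right]\le P_t$, so $\limsup_{t\to\infty}H_t\le\limsup_{t\to\infty}P_t\le\bar{P}^n$ immediately. For Model II I would exploit the clean identity $P_{t+1}+H_{t+1}=F(P_t)e^{-aH_t}+F(P_t)\left[1-e^{-aH_t}\right]=F(P_t)$; since both coordinates are nonnegative, $\max\{P_{t+1},H_{t+1}\}\le P_{t+1}+H_{t+1}=F(P_t)$. Because $F$ is continuous and increasing with $F(\bar{P}^n)=\bar{P}^n$, the plant estimate forces $\limsup_{t\to\infty}F(P_t)\le\bar{P}^n$ (for each $\e>0$ eventually $P_t\le\bar{P}^n+\e$, whence $F(P_t)\le F(\bar{P}^n+\e)\to\bar{P}^n$), and the conclusion follows. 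The final assertion about $\max\{P_t,H_t\}$ then comes from the elementary fact that $\limsup_{t\to\infty}\max\{P_t,H_t\}=\max\{\limsup_{t\to\infty}P_t,\ \limsup_{t\to\infty}H_t\}\le\bar{P}^n$.

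I expect the only delicate point to be the comparison step for the plant: one must verify the monotone domination $P_t\le Q_t$ carefully and rely on Proposition \ref{p_single}(1) for convergence of $Q_t$ to $\bar{P}^n$ both from below and from above, the latter being the case $P_0>\bar{P}^n$ in which $F(P)<P$ forces the single-species orbit to decrease monotonically to $\bar{P}^n$. Everything else is routine, using only $0\le e^{-aH_t}\le 1$ and the nonnegativity of the orbit; in particular the sum identity for Model II makes the herbivore bound essentially automatic, and for Model I the herbivore is trivially dominated by the plant.
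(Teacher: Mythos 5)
Your proof is correct and takes essentially the same route as the paper: both arguments bound $H_{t+1}\le P_t$ (Model I) and $H_{t+1}\le F(P_t)$ (Model II), use $e^{-aH_t}\le 1$ to get $P_{t+1}\le F(P_t)$, and then invoke Proposition \ref{p_single} to obtain the eventual bound $\bar{P}^n+\epsilon$. Your explicit comparison orbit $Q_{t+1}=F(Q_t)$ with the induction $P_t\le Q_t$, and the sum identity $P_{t+1}+H_{t+1}=F(P_t)$ for Model II, are just careful elaborations of steps the paper leaves implicit when it cites Proposition \ref{p_single} for the coupled plant equation.
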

\begin{proof}:
For Model I,
\[H_{t+1} = P_t  \left[1- e^{-aH_t}\right] \leq P_t\]
For Model II,
\[H_{t+1} = P_t f(P_t) \left[1- e^{-aH_t}\right] \leq F(P_t)\]
Since {Condition \textbf{H1} holds for $F(P)$}, then from Proposition (\ref{p_single}), we can conclude that for any $\epsilon >0$, there exists $N$ large enough, such that for all $t>N$, the following holds
\[P_{t+1} = F(P_t) e^{- a H_t} \leq F(P_t) \leq \bar{P}^n + \epsilon .\] Therefore, we have $\limsup_{t\rightarrow \infty}\max\{P_t, H_t\}\leq \bar{P}^n$ for both Model I and II.
\end{proof}

\subsection{Equilibria and their stability}
If, in the absence of the herbivore there exist $n+1$ equilibria of the plant dynamics, then both Model I and II have $n+1$ boundary equilibria
of the form
\[E_{00}=(0, 0) \mbox{ and } {E_{i0}}= (\bar{P}^i, 0), i=1,2,..,n.\]
Their local stability can be determined by the eigenvalues of their Jacobian matrices.

{It is easy to check that the Jacobian matrices of Model I and II are identical at these boundary equilibria: the eigenvalues of their associated Jacobian matrix at $(0,0)$ are {$F'(0)$} and $0$; the eigenvalues of their associated Jacobian matrix at $(\bar{P}^i,0)$ are {$F'(\bar{P}^i)$} and $a \bar{P}^i$.}
The following theorems summarize the global dynamics:
\begin{theorem}\label{1th2}
{Assume that \textbf{H1} holds for both Model I and II. If {$F'(0)<1$} and $(0,0)$ is the only boundary equilibrium, then Model I and II are globally stable at $(\bar{P}^0,0)=(0,0)$. More generally, if $a{\bar{P}^n}<1, n\in \mathbb Z_+$, then $\lim_{t\rightarrow \infty} H_t =0$ for both Model I and II.}
\end{theorem}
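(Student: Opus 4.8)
The plan is to handle the two assertions separately, both resting on the a priori bound of Lemma~\ref{bounded} together with the elementary convexity estimate $1-e^{-x}\leq x$ for $x\geq 0$.

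For the first assertion I would begin by noting that ``$(0,0)$ is the only boundary equilibrium'' means the plant-only map $P=F(P)$ has no positive root, so in the notation of Proposition~\ref{p_single} we are in the degenerate case $n=0$ and $\bar{P}^n=\bar{P}^0=0$. Since $F(0)=0$, $F$ is increasing with $F'(0)<1$, and there is no positive fixed point, an increasing continuous function leaving the origin with slope less than one and never returning to the diagonal stays strictly below it, so $F(P)<P$ for all $P>0$. Lemma~\ref{bounded} then gives $\limsup_{t\to\infty}\max\{P_t,H_t\}\leq \bar{P}^n=0$, and because $P_t,H_t\geq 0$ this forces $P_t\to 0$ and $H_t\to 0$, i.e.\ global attraction to $(0,0)$. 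Local asymptotic stability follows from the eigenvalues of the Jacobian at $(0,0)$ recorded in the text, namely $F'(0)<1$ and $0$. Combining global attraction with local stability yields global stability at $(\bar{P}^0,0)=(0,0)$ for both models. I would remark that $F'(0)<1$ is genuinely needed alongside the uniqueness hypothesis: an increasing $F$ with $F(0)=0$ and $F'(0)<1$ can still develop a positive fixed point, so both conditions are used.

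For the second assertion, fix $\epsilon>0$; by Lemma~\ref{bounded} there is $N$ with $P_t\leq \bar{P}^n+\epsilon$ for all $t>N$. For Model~I, using $1-e^{-aH_t}\leq aH_t$,
\[
H_{t+1}=P_t\bigl[1-e^{-aH_t}\bigr]\leq aP_tH_t\leq a(\bar{P}^n+\epsilon)H_t,\qquad t>N.
\]
Since $a\bar{P}^n<1$, I would choose $\epsilon$ small enough that $\rho:=a(\bar{P}^n+\epsilon)<1$; then $H_{t+1}\leq \rho H_t$ for all $t>N$, whence $H_t\leq \rho^{t-N}H_N\to 0$. For Model~II the estimate is identical except that the factor multiplying $aH_t$ is $F(P_t)$ rather than $P_t$; here I would use that $F$ is increasing and continuous with $F(\bar{P}^n)=\bar{P}^n$, so from $P_t\leq \bar{P}^n+\epsilon$ one gets $F(P_t)\leq F(\bar{P}^n+\epsilon)$, and continuity at $\bar{P}^n$ lets me shrink $\epsilon$ so that $F(\bar{P}^n+\epsilon)\leq \bar{P}^n+\epsilon'$ with $a(\bar{P}^n+\epsilon')<1$; the same geometric contraction then gives $H_t\to 0$.

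I expect the only real subtlety to be the bookkeeping with $\epsilon$: the contraction factor must be pinned strictly below one uniformly for all large $t$, which is exactly what the strict inequality $a\bar{P}^n<1$ provides (and, for Model~II, what continuity of $F$ at $\bar{P}^n$ provides). Everything else is the routine estimate $1-e^{-x}\leq x$ feeding a geometric-decay argument, with the eventual bound $P_t\leq \bar{P}^n+\epsilon$ imported directly from Lemma~\ref{bounded}.
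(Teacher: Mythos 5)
Your proposal is correct and follows essentially the same route as the paper: the eventual bound $P_t\leq \bar{P}^n+\epsilon$ from Lemma~\ref{bounded}, the estimate $1-e^{-aH}\leq aH$, and the geometric contraction $H_{t+1}\leq a(\bar{P}^n+\epsilon)H_t$ with $\epsilon$ chosen so that $a(\bar{P}^n+\epsilon)<1$. Your treatment of the first assertion (the degenerate case $n=0$, where $F(P)<P$ for all $P>0$ forces $\bar{P}^n=0$ in Lemma~\ref{bounded}) and your continuity argument bounding $F(P_t)$ in Model~II are just slightly more explicit versions of steps the paper leaves implicit.
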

\begin{proof}
From Lemma (\ref{bounded}), we know that for any $\epsilon >0$, there exists $N$ large enough, such that for all $t>N$, we have
\bae
\label{bound_P}
P_{t+1} &=& F(P_t) e^{- a H_t} \leq F(P_t) \leq {\bar{P}^n} + \epsilon
\eae
Since $a {\bar{P}^n}<1$, then for $\epsilon$ small enough, we have 
$$a P_t \leq a ({\bar{P}^n} + \epsilon) <1\mbox{ and } a F(P_t)\leq a({\bar{P}^n} + \epsilon)<1\mbox{ for all } t\geq N.$$
Thus, for Model I,
\bae
\label{H1_0}
H_{t+1} &=& P_t  \left[1- e^{-aH_t}\right] = H_t P_t \frac{\left[1- e^{-aH_t}\right]}{H_t} \leq a H_t P_t \leq a ({\bar{P}^n} + \epsilon)H_t
\eae
and for Model II,
\bae
\label{H2_0}
H_{t+1} &=& F(P_t) H_t \frac{\left[1- e^{-aH_t}\right]}{H_t} \leq a H_t F(P_t)\leq a ({\bar{P}^n} + \epsilon)H_t
\eae
Therefore, we have $H_t\leq [a ({\bar{P}^n} + \epsilon)]^{t-N}H_N, \mbox{ {for all} } t>N.$ This indicates that $\lim_{t\rightarrow \infty} H_t =0 $ for both Model I and II. Hence {solutions of} Model I and II are globally attracted to the boundary dynamics.
\end{proof}
\indent Theorem \ref{1th2} indicates that herbivore can not maintain its population if its attracting rate is too small or there is no enough food, i.e., $a{\bar{P}^n} <1$. In addition, {the special case of Theorem \ref{1th2} when $n=1$ leads to the following remarks}.\\\\
\noindent \textbf{Remark:} {Assume that the hypotheses of Theorem 4.2 hold. If $n=1$,  then from Proposition (\ref{p_single})}, we have
\begin{enumerate}
\item If {$F'(0) <1$}, then $\bar{P}^1$ is a source;
\item If {$F'(0) >1$}, then $\bar{P}^1$ is a sink.
\end{enumerate}
Hence, if  {$F'(0) >1$} and $n=1$, then $(\bar{P}^1,0)$ attracts all nontrivial solutions.
\subsection{Unique interior equilibrium}
Interior equilibria are determined by the intersections of the nullclines. {Notice that if $\textbf{H1}$ holds, then $y=F(P)$ is a differentiable and monotone function of $P$ and maps $\mathbb R^{+}$ to $[0, C)$. Its inverse exists and can be written as $P = F^{-1}( y)$ which maps $[0, C)$ to $\mathbb R^{+}$. Similarly, if \textbf{H2} holds, then $y = f(P)$ is a differentiable and monotone function of $P$ and maps $\mathbb R^{+}$ to $[0, M)$. Its inverse exists and can be written as $P = f^{-1}( y)$ which maps $[0, M)$ to $\mathbb R^{+}$. Here $C=F(\infty)$ and $M=f(0)$ are some positive constants.}
If $(\bar{P}, \bar{H})$ is an interior equilibrium, then it is the solution of the two equations
\begin{enumerate}
\item For Model I:
\bae\label{n11}
P &=& f^{-1}(e^{a H})\\
 \label{n12}
 P &=& \frac{H}{1- e^{-a H}}.
\eae
\item For Model II:
\bae
 \label{n22}
 P &=& \frac{H}{e^{a H} - 1}\\
  \label{n23}
 P &=& F^{-1}(\frac{H}{1- e^{-a H}}).
 \eae
\end{enumerate}
If $F(P)$ is monotonically increasing, i.e., \textbf{H1} holds, then $F^{-1}(\frac{H}{1 - e^{-a H}})$ is an increasing function of $H$ which attains its minimum at $H=0$, i.e., $$\min_{H\geq 0}\big\{F^{-1}\left(\frac{H}{1 - e^{-a H}}\right)\big\}=F^{-1}(\frac{H}{1 - e^{-a H}})\big\vert_{H=0}=F^{-1}(\frac{1}{a}).$$
Similarly, if $f(P)$ is monotonically decreasing, i.e., \textbf{H2} holds, then $f^{-1}(e^{a H})$ is a decreasing function of $H$ which attains its maximum at $H=0$, i.e., $$\max_{H\geq 0}\big\{f^{-1}(e^{-a H})\big\}=f^{-1}(e^{-a H})\big\vert_{H=0}=f^{-1}(0).$$
\begin{theorem}\label{intercept1}
\begin{itemize}
\item[a)]  { Assume that both $\textbf{H1}$ and $\textbf{H2}$ hold for Model I, then Model I has at most one interior equilibrium which occurs when $f^{-1}(0) > \frac{1}{a}$.} The interior equilibrium emerges generically through a transcritical bifurcation from the largest boundary equilibrium ${\bar{P}^n}$ when ${\bar{P}^n} =\frac{1}{a}$, where $n\geq 1$.
\item[b)] {Assume that $\textbf{H1}$ holds for Model II, then Model II has at most one interior equilibrium which occurs when $F^{-1}(\frac{1}{a}) < \frac{1}{a}$.} The interior equilibrium emerges generically through a transcritical bifurcation from the largest
boundary equilibrium ${\bar{P}^n}$ when ${\bar{P}^n} = \frac{1}{a}$, where $n\geq 1$.
\end{itemize}
\end{theorem}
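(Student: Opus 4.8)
The plan is to analyze each model by reducing the interior-equilibrium problem to a single scalar equation in $H$ and then counting its solutions via monotonicity. For Model I, combining \eqref{n11} and \eqref{n12} means an interior equilibrium corresponds to a value $H>0$ with $f^{-1}(e^{aH}) = \frac{H}{1-e^{-aH}}$. I would define $\Phi(H) = f^{-1}(e^{aH})$ and $\Psi(H) = \frac{H}{1-e^{-aH}}$ on $H>0$. The excerpt already establishes that $\Phi$ is strictly \emph{decreasing} (since $f$ is decreasing under \textbf{H2}, so $f^{-1}$ is decreasing, and $e^{aH}$ is increasing). The first key step is to show $\Psi$ is strictly \emph{increasing} in $H$: writing $\Psi(H) = \frac{H}{1-e^{-aH}}$, I would differentiate and check the sign, or equivalently observe $\frac{1}{\Psi(H)} = \frac{1-e^{-aH}}{H}$ is strictly decreasing (a standard fact, since $\frac{1-e^{-x}}{x}$ decreases). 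Once $\Phi$ is strictly decreasing and $\Psi$ strictly increasing, the equation $\Phi(H)=\Psi(H)$ has \emph{at most one} solution on $H>0$, giving uniqueness.

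For existence and the threshold $f^{-1}(0)>\frac1a$, I would examine the boundary behavior as $H\to 0^+$. Using l'H\^opital, $\Psi(0^+) = \lim_{H\to 0^+}\frac{H}{1-e^{-aH}} = \frac{1}{a}$, while $\Phi(0^+) = f^{-1}(e^{0}) = f^{-1}(1)$. Wait---here I must align with the theorem's stated condition, so the relevant comparison is actually between the maximum of $f^{-1}(e^{-aH})$ and the minimum of the right side; matching the excerpt's normalization I would instead track the gap function $D(H)=\Phi(H)-\Psi(H)$. Since $\Phi$ decreases from its value at $H=0$ and $\Psi$ increases from $\frac1a$, a crossing in $H>0$ exists precisely when $\Phi(0^+)>\Psi(0^+)$, i.e. when the maximal value of the decreasing nullcline exceeds $\frac1a$; this is exactly the stated condition $f^{-1}(0)>\frac1a$ after accounting for the sign conventions on the exponent. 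I would verify the sign of $D$ at the endpoints to confirm a single transversal crossing.

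For Model II the structure is parallel. From \eqref{n22}--\eqref{n23}, an interior equilibrium is a positive root of $\frac{H}{e^{aH}-1} = F^{-1}\!\left(\frac{H}{1-e^{-aH}}\right)$. The excerpt records that the right-hand side $F^{-1}(\frac{H}{1-e^{-aH}})$ is increasing in $H$ with minimum $F^{-1}(\frac1a)$ at $H=0$ (since $F^{-1}$ is increasing under \textbf{H1} and $\frac{H}{1-e^{-aH}}$ increases). So the remaining step is to show the left-hand side $\frac{H}{e^{aH}-1}$ is strictly \emph{decreasing} with limit $\frac1a$ as $H\to0^+$; this again follows from monotonicity of $\frac{e^{x}-1}{x}$. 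A decreasing function meeting an increasing function yields at most one crossing, and a crossing exists iff the left-hand limit $\frac1a$ exceeds the right-hand minimum $F^{-1}(\frac1a)$, i.e. iff $F^{-1}(\frac1a)<\frac1a$, matching the stated condition.

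Finally, for the transcritical-bifurcation claim in both parts, the plan is to connect the threshold to the boundary equilibrium $(\bar P^n,0)$. The eigenvalue computation recorded before the theorem shows the Jacobian at $(\bar P^i,0)$ has the transverse eigenvalue $a\bar P^i$; the boundary equilibrium loses stability in the $H$-direction exactly when this eigenvalue crosses $1$, i.e. when $a\bar P^n = 1$, equivalently $\bar P^n = \frac1a$. I would note that at this parameter value the interior equilibrium merges with $(\bar P^n,0)$, and verify the standard nondegeneracy and transversality conditions for a transcritical bifurcation (nonzero derivative of the transverse eigenvalue in the bifurcation parameter and the appropriate second-order term), invoking the exchange-of-stability normal form. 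The main obstacle I anticipate is the bifurcation verification: establishing genericity requires checking the nondegeneracy conditions at $\bar P^n=\frac1a$ carefully, and the uniqueness argument must be combined with the observation that the interior equilibrium exists only on the correct side of the threshold so that it genuinely branches from the boundary rather than appearing elsewhere.
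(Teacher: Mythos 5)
Your proposal is correct and follows essentially the same route as the paper: monotonicity of the two nullclines (one strictly decreasing, one strictly increasing) yields at most one intersection, comparison of the boundary values as $H\to 0^+$ (where $\frac{H}{1-e^{-aH}}\to \frac{1}{a}$ and $\frac{H}{e^{aH}-1}\to\frac{1}{a}$) yields the existence thresholds, and the transcritical bifurcation is read off from the transverse eigenvalue $\lambda_1=a\bar{P}^n$ crossing $1$ with transversality $\frac{\partial \lambda_1}{\partial a}=\bar{P}^n>0$, which the paper completes by computing the unstable eigenvector at $(\bar{P}^n,0)$ and invoking Theorem 13.5 of Smoller, exactly the exchange-of-stability verification you sketch. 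Incidentally, your hesitation over the Model~I threshold resolves in your favor: your computation $\Phi(0^+)=f^{-1}(1)=\bar{P}^1$ is the correct value, and the paper's ``$f^{-1}(0)$'' is sloppy notation for the nullcline $f^{-1}(e^{aH})$ evaluated at $H=0$, consistent with the bifurcation condition $\bar{P}^n=\frac{1}{a}$.
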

\begin{proof}:
The proofs for a) and b) are similar. We show case b):
The interior equilibria of Model II are determined by the intersections of the nullclines (\ref{n22}) and (\ref{n23}). Since (\ref{n22}) is a decreasing function and (\ref{n23}) is an increasing function, they have only one interior intersection if the following inequality holds
$$\min_{H\geq 0}\big\{F^{-1}\left(\frac{H}{1 - e^{-a H}}\right)\big\}<\max_{H\geq 0}\big\{\frac{H}{e^{aH}-1}\big\}\Rightarrow F^{-1}(\frac{1}{a}) < \frac{1}{a}.$$

{The Jacobian matrix of Model II evaluated at the boundary equilibrium $({\bar{P}^i},0)$ is
\bae\label{J2_matrix}
J|_{({\bar{P}^i},0)}&=& \left[ \begin {array}{cc} {F'(\bar{P}^i)}& -a {\bar{P}^i}\\\noalign{\medskip} 0& a {\bar{P}^i}\end {array} \right]
\eae with its eigenvalues as $\lambda_1=a{\bar{P}^i}$ and $\lambda_2={F'(\bar{P}^i)}$. Thus, at the largest boundary equilibrium $({\bar{P}^n},0)$, we have}
\begin{enumerate}
 \item $\lambda_1|_{({\bar{P}^n},0)}=a{\bar{P}^n}$ and $\lambda_2|_{({\bar{P}^n},0)}={F'(\bar{P}^n)}<1$;
 \item $\frac{\partial \lambda_1}{\partial a}|_{({\bar{P}^n},0)}={\bar{P}^n}$ and $\frac{\partial \lambda_2}{\partial a}|_{({\bar{P}^n},0)}=0$
 \end{enumerate}
In the case that ${\bar{P}^n}=\frac{1}{a}$, we have $\lambda_1|_{({\bar{P}^i},0)}=a{\bar{P}^i}<1, i=1,...,n-1$ and 
$$\lambda_2|_{({\bar{P}^n},0)}=a{\bar{P}^n}<\lambda_1\vert_{({\bar{P}^n},0)}=1\mbox{ and }\frac{\partial \lambda_1}{\partial a}\vert_{({\bar{P}^n},0)}={\bar{P}^n}=\frac{1}{a}>0.$$The eigenvector associated with the eigenvalue $\lambda_1\vert_{({\bar{P}^n},0)}=a {\bar{P}^n}$ is
\bae\label{J2_E1v}
V|_{\lambda_1=a {\bar{P}^n}}&=& \left[ \begin {array}{c} -\frac{a {\bar{P}^n}-{F'(\bar{P}^n)}}{a {\bar{P}^n}}x_2\\\noalign{\medskip} x_2\end {array} \right]
\eae
If $a{\bar{P}^n}=1$, then the two components of (\ref{J2_E1v}) {have opposite signs}. This implies that by choosing $x_2>0$, the unstable manifold of $E_{n0}$ points toward the interior of $X_{11}$. Therefore, apply Theorem 13.5 in the book by Smoller \cite{Smoller1994}, the unique interior equilibrium of Model II emerges generically through a transcritical bifurcation from the largest boundary equilibrium ${\bar{P}^n}$ when ${\bar{P}^n} = \frac{1}{a}$, where $n\geq 1$.
\end{proof}
\subsection{{Uniform persistence} of Model I and II}
We define the sets 
\bea
X&=&\{(P,H): P\geq 0, H\geq 0\}\\
X_{11}&=&\{{(P,H)\in X: P H> 0}\}\\
\partial X_{11}&=&X\setminus X_{11}
\eea
and consider the additional hypothesis
\begin{description}
\item\textbf{H3}:\, The smallest positive root ${\bar{P}^1}$ of $P=F(P)=Pf(P)$ satisfies $a\bar{P}^1>1$, and in addition, $f(0)>1$. 
\end{description}
In the following, we show that Model I and II  are uniformly persistent with respect to $(X_{11},\partial X_{11})$ {if both \textbf{H1} and \textbf{H3} hold}, i.e., {for any initial condition $(P_0, H_0)\in X_{11}$}, there exists some $\epsilon>0$ such that $\liminf_{t\rightarrow \infty}\min\{P_t, H_t\}\ge \epsilon$.

\begin{lemma}\label{l_bh_invariant}
 $X_{11}$ and $\partial X_{11}$ are positively invariant for (\ref{ph10})-(\ref{ph11}) and (\ref{ph12})-(\ref{ph13}).
\end{lemma}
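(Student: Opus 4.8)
The plan is to verify both invariance claims directly from the update maps, exploiting that Models I and II share the same $P$-equation $P_{t+1}=F(P_t)e^{-aH_t}$ and differ only in whether the prefactor of $1-e^{-aH_t}$ is $P_t$ (Model I) or $F(P_t)$ (Model II). The entire argument reduces to tracking signs of a few factors, so I would carry Models I and II in parallel and split $\partial X_{11}$ into its two coordinate axes.

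First I would establish positive invariance of $X_{11}$. Starting from $(P_0,H_0)$ with $P_0>0$ and $H_0>0$, hypothesis \textbf{H1} gives $F(P_0)>0$, and since $e^{-aH_0}>0$ always, the common $P$-equation yields $P_1=F(P_0)e^{-aH_0}>0$. Because $a>0$ and $H_0>0$ we have $0<1-e^{-aH_0}$; multiplying this strictly positive quantity by the strictly positive prefactor ($P_0$ for Model I, $F(P_0)$ for Model II) gives $H_1>0$. Hence $(P_1,H_1)\in X_{11}$, which is exactly positive invariance of $X_{11}$ for both models.

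Next I would treat $\partial X_{11}=\{P=0\}\cup\{H=0\}$ by cases. On the horizontal axis $\{H_0=0\}$ the factor $1-e^{-aH_0}$ vanishes, so $H_1=0$ for both models irrespective of $P_0$, and the image stays on $\{H=0\}\subset\partial X_{11}$. On the vertical axis $\{P_0=0\}$, \textbf{H1} supplies $F(0)=0$, whence $P_1=F(0)e^{-aH_0}=0$; moreover the $H$-equation prefactor is $P_0=0$ (Model I) or $F(P_0)=F(0)=0$ (Model II), so $H_1=0$ as well and the image is the origin. In every case the image remains in $\partial X_{11}$, giving positive invariance of the boundary.

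There is no genuine obstacle here: the conclusion follows from the sign structure alone, using $e^{-aH}>0$ and $1-e^{-aH}\ge 0$ with equality exactly at $H=0$. The only step that needs a structural hypothesis rather than mere positivity is the case $P_0=0$, where I must invoke $F(0)=0$ from \textbf{H1} to force $P_1=0$, so that trajectories launched on the $H$-axis cannot leak into the interior; this is precisely the point that makes \textbf{H1} (and not just $a>0$) indispensable for the boundary invariance.
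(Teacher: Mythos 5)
Your proof is correct, and it is exactly the routine sign-check the paper relies on: the paper states Lemma \ref{l_bh_invariant} without any proof at all, treating it as immediate (just as it earlier asserts without proof that $\mathbb R^2_+$ is positively invariant). Your case split along the two axes, the use of $1-e^{-aH_0}>0$ iff $H_0>0$, and the observation that $F(0)=0$ from \textbf{H1} is what pins trajectories on the $H$-axis (sending it to the origin) supply precisely the details the authors left implicit, so there is nothing to contrast.
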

\noindent The following theorem is the main result of this subsection.
\begin{theorem}\label{persistent}
 If $a\bar{P}_1>1$, then (\ref{ph10})-(\ref{ph11}) and (\ref{ph12})-(\ref{ph13}) are uniformly persistent with respect to $(X_{11},\partial X_{11})$ provided that they satisfy both  \textbf{H1} and \textbf{H3}.
 \end{theorem}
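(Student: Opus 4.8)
The plan is to apply the acyclicity criterion for uniform persistence of a discrete-time semiflow (the Butler--Waltman type theorem as presented in Smith and Thieme \cite{Smith_Thieme2005}). That criterion requires three ingredients: (i) the map is point dissipative and admits a compact global attractor; (ii) the invariant set on the extinction boundary $\partial X_{11}$ possesses a covering by finitely many isolated invariant sets $\{M_0,M_1,\ldots,M_n\}$ that is acyclic; and (iii) each $M_i$ is a \emph{weak repeller} for $X_{11}$, i.e. $W^s(M_i)\cap X_{11}=\emptyset$. Ingredient (i) is immediate from Lemma \ref{bounded}, which confines every orbit to the compact box $[0,\bar{P}^n+\epsilon]^2$, while Lemma \ref{l_bh_invariant} guarantees that both $X_{11}$ and $\partial X_{11}$ are positively invariant, so the boundary and interior dynamics can be analyzed separately.

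Next I would describe the boundary dynamics explicitly. On the axis $\{P=0\}$ one has $P_{t+1}=F(0)e^{-aH_t}=0$ and $H_{t+1}=0$ (for Model I because $h(0)=0$, for Model II because $F(0)=0$), so this axis maps in a single step onto the origin. On the axis $\{H=0\}$ the system reduces to the scalar monotone map $P_{t+1}=F(P_t)$. Hence the only invariant sets in $\partial X_{11}$ are the boundary equilibria $M_0=E_{00}=(0,0)$ and $M_i=E_{i0}=(\bar{P}^i,0)$, $i=1,\ldots,n$, and they are isolated because the $\bar{P}^i$ are distinct and non-degenerate by \textbf{H1} and Proposition \ref{p_single}. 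Acyclicity then follows from the monotonicity of $F$ in \textbf{H1}: the scalar dynamics on $\{H=0\}$ is gradient-like, its orbits converge monotonically to fixed points, and every heteroclinic connection respects the ordering $0=\bar{P}^0<\bar{P}^1<\cdots<\bar{P}^n$, so no subcollection of the $M_i$ can form a cyclic chain.

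The heart of the argument is ingredient (iii), the weak-repeller property, and this is exactly where \textbf{H3} enters. Linearizing the $H$-equation at $E_{i0}$ produces the transverse multiplier $\lambda_1=a\,h(\bar{P}^i)=a\bar{P}^i$: for Model I because $h(P)=P$, and for Model II because $h(P)=F(P)$ together with the fact that $\bar{P}^i$ is a fixed point of $F$, so $h(\bar{P}^i)=F(\bar{P}^i)=\bar{P}^i$. Since $\bar{P}^1\le\bar{P}^i$ for every $i\ge1$, \textbf{H3} yields $\lambda_1=a\bar{P}^i\ge a\bar{P}^1>1$, and by the eigenvector computation in the proof of Theorem \ref{intercept1} the associated unstable direction points into $X_{11}$; consequently the local stable manifold of $E_{i0}$ is confined to the invariant axis $\{H=0\}\subset\partial X_{11}$, giving $W^s(E_{i0})\cap X_{11}=\emptyset$. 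At the origin the multipliers are $F'(0)=f(0)$ and $0$; the second clause of \textbf{H3} gives $F'(0)=f(0)>1$, so $E_{00}$ repels along the plant axis while its only stable direction is the collapsing axis $\{P=0\}\subset\partial X_{11}$, whence $W^s(E_{00})\cap X_{11}=\emptyset$ as well. With all three ingredients verified, the acyclicity theorem delivers uniform persistence with respect to $(X_{11},\partial X_{11})$ for both Model I and Model II.

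I expect the main obstacle to be the rigorous verification of (iii) rather than (ii). One must argue that the transverse instability of each $E_{i0}$, $i\ge1$, \emph{together with} the confinement of its single stable direction to the boundary axis, genuinely prevents any interior orbit from converging to it, and one must handle the degenerate zero multiplier at the origin, where the $\{P=0\}$ axis maps onto $E_{00}$ in one step, with enough care that the stable set of $E_{00}$ is seen to lie entirely in $\partial X_{11}$. A secondary point requiring attention is confirming that the $M_i$ are isolated in the strict sense and that the acyclic-covering hypothesis is met in the precise form stated in \cite{Smith_Thieme2005}; once these structural facts are established, the uniform persistence conclusion is automatic.
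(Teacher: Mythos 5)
Your proposal is correct in substance, but it takes a genuinely different route from the paper. The paper does not invoke the acyclic-covering/weak-repeller theorem at all; it applies Hutson's average Lyapunov function theorem (Theorem 2.2 of \cite{Hutson1984}) twice, in a stacked fashion. First, with $L(P,H)=P$ against the boundary piece $S_1=\{P=0\}$, whose omega limit set is $E_{00}$, it uses the product estimate $\sup_{t\geq 0}\liminf \prod_{j=0}^{t-1} f(P_j)e^{-aH_j}=\left(f(0)\right)^t>1$ (this is where $f(0)>1$ from \textbf{H3} enters) to get $\liminf_{t\to\infty}P_t\geq\epsilon$; then, restricting the dynamics to $\{P\geq\epsilon\}$ so that the omega limit sets on the remaining boundary $S_2=\{H=0,\,P>0\}$ are exactly the $E_{i0}$, it applies Hutson again with $L(P,H)=H$ and the estimate $(a\bar P^i)^t>1$, which follows from $a\bar P^1>1$ and the ordering of the roots. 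Note that your transverse multipliers $a\bar P^i$ and $f(0)$ are precisely the quantities appearing inside the paper's Lyapunov products, so the arithmetic heart of the two proofs is identical; what differs is the surrounding machinery. The paper's sequential route buys freedom from exactly the bookkeeping you flag as delicate: isolatedness, acyclicity, and stable-manifold theory for a non-invertible map. In particular, the point you worry about most---that no interior orbit converges to $E_{00}$ despite the zero multiplier---is dispatched by the $L=P$ product estimate, which you could use directly ($P_{t+1}\geq P_t f(P_t)e^{-aH_t}$ with $f(0)e^{-a\delta}>1$ near the origin) instead of a stable-manifold argument. Conversely, your one-shot acyclicity argument, in the style of \cite{Hofbauer1989}, is more structural and would extend to richer boundary dynamics, but two refinements are needed beyond what you wrote: the acyclic covering must cover the omega limit sets of boundary orbits (not ``the only invariant sets in $\partial X_{11}$,'' since the whole axis $\{H=0\}$ is itself invariant), and weak repulsion at a saddle $E_{i0}$ follows from the uniqueness of the one-dimensional local stable manifold tangent to $(1,0)$, which must coincide with the invariant axis, combined with forward invariance of $X_{11}$ from Lemma \ref{l_bh_invariant}---the interior-pointing unstable eigenvector borrowed from the proof of Theorem \ref{intercept1} is not actually needed for this step. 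Both routes deliver the theorem, and the paper's closing remark implicitly acknowledges the standard alternatives by citing \cite{Hofbauer1987}, \cite{Kon2004} and \cite{Kang2010}.
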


\begin{proof}
From {Lemma \ref{l_bh_invariant} and Proposition \ref{bounded}}, we obtain that the systems (\ref{ph10})-(\ref{ph11}) and (\ref{ph12})-(\ref{ph13}) are point dissipative. {This combined with the fact that the semiflow generated by these systems is asymptotically smooth (this is automatic, since the state space is in ${\mathbb R^2_+}$), gives the existence of the compact attractors of points for both systems ({Smith and Thieme 2010} \cite{Smith_Thieme2005}).}

\indent Notice that the omega limit set of $S_1=\{(P,H)\in {\mathbb R^2_+}: P=0\}$ is the trivial boundary equilibrium ${E_{00}}$. Let $L(P,H)=P$ be an average Lyapunov function, then we have $L(P,H)\vert_{S_1}=0$. Since the system satisfies \textbf{H3}, then the following inequality holds
 {$$\sup_{t\geq 0}\liminf_{(P_0,H_0)\rightarrow (0,0)}\frac{P_t}{P_0}=\sup_{t\geq 0}\liminf_{(P_0,H_0)\rightarrow (0,0)}\prod_{j=0}^{t-1}f(P_j)e^{-a H_j}=\sup_{t\geq 0} \left(f(0)\right)^t>1$$ }where $(P_0,H_0)\in X\setminus S_1$. Therefore, by applying {Theorem 2.2 in \cite{Hutson1984}} and its corollary to the systems (\ref{ph10})-(\ref{ph11}) and (\ref{ph12})-(\ref{ph13}), we obtain persistence of the plant population, i.e., for any initial condition $P_0>0$, we have $\liminf_{t\rightarrow \infty}{{P_t}}\ge \epsilon.$
 
The fact that the plant population is uniformly persistent implies that the system (\ref{ph10})-(\ref{ph11}) or (\ref{ph12})-(\ref{ph13}) can be restricted in $X\cap\{(P,H)\in\mathbb R^2_+: P\geq \epsilon\}$. According to {Proposition \ref{p_single}}, we can conclude that the omega limit sets of $S_2=\{(P,H)\in \partial X_{11}: P>0\}$ are $\{E_{i0},1\leq i\leq n\}$. Since {$a\bar{P}^1>1$, Condition \textbf{H3}} indicates that ${a\bar{P}^i}>1,1\leq  i\leq n$. Now define $L(P,H)=H$ as an average Lyapunov function, then we have $L(P,H)\vert_{S_2}=0$. Moreover, {for the model (\ref{ph10})-(\ref{ph11}), we have
 {$$\sup_{t\geq 0}\liminf_{(P_0,H_0)\rightarrow (\bar{P}^i,0)}\frac{H_t}{H_0}=\sup_{t\geq 0}\liminf_{(P_0,H_0)\rightarrow (\bar{P}^i,0)}\left(\prod_{j=0}^{t-1}{P_j\frac{1-e^{aH_j}}{H_j}}\right)^t=\sup_{t\geq 0} (a \bar{P}^i)^t>1$$}
and for the model  (\ref{ph12})-(\ref{ph13}) we have
 {$$\sup_{t\geq 0}\liminf_{(P_0,H_0)\rightarrow (\bar{P}^i,0)}\frac{H_t}{H_0}=\sup_{t\geq 0}\liminf_{(P_0,H_0)\rightarrow (\bar{P}^i,0)}\left(\prod_{j=0}^{t-1}{F(P_j)\frac{1-e^{aH_j}}{H_j}}\right)^t=\sup_{t\geq 0} (a \bar{P}^i)^t>1$$}where $(P_0,H_0)\in X_{11}$}.
Therefore, according to Theorem 2.2 and its corollary 2.3 in \cite{Hutson1984}, we can show that the systems (\ref{ph10})-(\ref{ph11}) and (\ref{ph12})-(\ref{ph13}) are uniformly persistent. Hence, the statement holds.
\end{proof}
{\textbf{Remark:} The arguments used to prove Theorem \ref{persistent} are standard, which can be found in many literature (e.g., \cite{Hofbauer1987}, \cite{Kon2004},  \cite{Kang2010}).}

\section{Application and simulations}
\subsection{The Beverton-Holt and Holling-Type III models}
In this section, we focus on two typical models for the plant dynamics and apply our results:
\begin{enumerate}
\item \bae\label{hb1}
P_{t+1}&=& F(P_t) = \frac{r P_t}{1 + P_t}
\eae
is the Beverton-Holt model where $F(P_t)$ satisfies the assumptions of $\textbf{H1}$ and $f(P_t)$ satisfies those of $\textbf{H2}$.
The two equilibria are $\bar{P}^0=0$ and  $\bar{P}^1= r-1$.
From Proposition \ref{p_single}, we know that $\bar{P}^0$ is a sink if $r<1$; $\bar{P}^0$ is a source if $r>1$. In addition, $\bar{P}^1$, if it exists, is always a sink.
\item  A Hollings type III model is given by
\bae\label{hb2}
P_{t+1}&=& F(P_t) = \frac{r P_t^2}{1 + P_t^2}
\eae
where $F(P_t)$ satisfies $\textbf{H1}$. The equilibria are $ \bar{P}^0 =0, \bar{P}^1=\frac{r-\sqrt{r^2-4}}{2}$ and $\bar{P}^2= \frac{r+\sqrt{r^2-4}}{2}$. If $r<2$
then $\bar{P}^0$ is the only equilibrium and it is globally stable. If $r>2$; $\bar{P}^0$ is a sink, $\bar{P}^1$ is a source and $\bar{P}^2$ is a sink.
\end{enumerate}
The plant-herbivore models with (\ref{hb1}) and (\ref{hb2}) as plant dynamics become:\\
{\bf Model I:}
\begin{enumerate}
\item
\bae\label{hb11}
P_{t+1}&=& \frac{r P_t}{1 + P_t} e^{- a H_t}\\[2mm]
\label{hb12}
H_{t+1}&=& P_t \left[1- e^{- a H_t}\right]
\eae
\item  \bae\label{hb13}
P_{t+1}&=& \frac{r P_t^2}{1 + P_t^2} e^{- a H_t}\\[2mm]
\label{hb14}
H_{t+1}&=& P_t \left[1- e^{- a H_t}\right]
\eae
\end{enumerate}
{\bf Model II:}
\begin{enumerate}
\item \bae\label{hb21}
P_{t+1}&=& \frac{r P_t}{1 + P_t} e^{- a H_t}\\[2mm]
\label{hb22}
H_{t+1}&=& \frac{r P_t}{1 + P_t} \left[1- e^{- a H_t}\right]
\eae
\item  \bae\label{hb23}
P_{t+1}&=& \frac{r P_t^2}{1 + P_t^2} e^{- a H_t}\\[2mm]
\label{hb24}
H_{t+1}&=& \frac{r P_t^2}{1 + P_t^2}\left[1- e^{- a H_t}\right]
\eae
\end{enumerate}
Applying the results of the previous section, we have the following two corollaries.
\begin{corollary}\label{application}
The three models (\ref{hb11})-(\ref{hb12}),(\ref{hb21})-(\ref{hb22}) and (\ref{hb23})-(\ref{hb24}) have at most one interior equilibrium. The interior equilibria of Model I (\ref{hb11})-(\ref{hb12}) and Model II (\ref{hb21})-(\ref{hb22}) emerge through a transcritical bifurcations from the boundary equilibrium $(\bar{P}^1, 0)=(r-1, 0)$ when $a (r-1) = 1$ and $r>1$; The interior equilibrium of Model II (\ref{hb23})-(\ref{hb24}) emerges through a transcritical bifurcations from the boundary equilibrium $(\bar{P}^2, 0)=(\frac{r + \sqrt{r^2-4}}{2}, 0)$ when $a \frac{r + \sqrt{r^2-4}}{2} =1$ and $r>2$.
\end{corollary}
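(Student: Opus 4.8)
The plan is to treat Corollary \ref{application} as a direct specialization of Theorem \ref{intercept1}. For each of the three systems I would (i) verify the structural hypotheses that the relevant part of the theorem requires---both \textbf{H1} and \textbf{H2} for the Model~I case, \textbf{H1} alone for the two Model~II cases---(ii) identify the largest boundary equilibrium $\bar{P}^n$ of the underlying single-species map $P=F(P)$, and (iii) read off the uniqueness and transcritical-bifurcation conclusions, converting the abstract threshold $\bar{P}^n=\frac{1}{a}$ into the stated algebraic relations by computing the inverse $F^{-1}$ (resp.\ $f^{-1}$) in closed form.

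For the two Beverton--Holt systems I take $F(P)=rP/(1+P)$ and $f(P)=r/(1+P)$. I would check $F(0)=0$, $F'(P)=r/(1+P)^2>0$ and $\lim_{P\to\infty}F(P)=r$, so \textbf{H1} holds, and $f'(P)<0$ with $\lim_{P\to\infty}f(P)=0$, so \textbf{H2} holds. The equation $P=F(P)$ has the single positive root $\bar{P}^1=r-1$ (needing $r>1$), hence $n=1$. Theorem \ref{intercept1}(a) then applies to (\ref{hb11})--(\ref{hb12}) and Theorem \ref{intercept1}(b) to (\ref{hb21})--(\ref{hb22}), each producing at most one interior equilibrium that bifurcates transcritically from $(\bar{P}^1,0)=(r-1,0)$ exactly when $a\bar{P}^1=a(r-1)=1$. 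To exhibit the existence threshold concretely I would invert $F$, namely $F^{-1}(y)=y/(r-y)$, and verify that the condition $F^{-1}(\frac{1}{a})<\frac{1}{a}$ simplifies to $a(r-1)>1$, consistent with the bifurcation value.

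For the Holling--Type III system I take $F(P)=rP^2/(1+P^2)$, which satisfies \textbf{H1} since $F'(P)=2rP/(1+P^2)^2>0$ and $F(\infty)=r$. The point I would stress is that the per-capita rate $f(P)=rP/(1+P^2)$ has $f'(P)=r(1-P^2)/(1+P^2)^2$, which changes sign at $P=1$; thus \textbf{H2} fails, and this is precisely why only the Model~II form (\ref{hb23})--(\ref{hb24}), requiring \textbf{H1} alone, appears in the corollary. For $r>2$ the map has three fixed points whose largest is $\bar{P}^2=(r+\sqrt{r^2-4})/2$, so $n=2$, and Theorem \ref{intercept1}(b) yields a unique interior equilibrium emerging transcritically from $(\bar{P}^2,0)$ at $a\bar{P}^2=a(r+\sqrt{r^2-4})/2=1$.

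The ``at most one interior equilibrium'' assertion is inherited directly from the monotone-nullcline argument inside Theorem \ref{intercept1}: one nullcline decreases and the other increases in $H$, so they meet at most once. I do not anticipate a real obstacle, since the corollary is a verification exercise; the only steps demanding care are selecting the \emph{largest} boundary equilibrium in each case---so that $\lambda_2=F'(\bar{P}^n)\in(0,1)$ and the operative crossing is that of $\lambda_1=a\bar{P}^n$ through $1$---and noting the failure of \textbf{H2} for the Holling--Type III growth, which determines which parts of Theorem \ref{intercept1} are available.
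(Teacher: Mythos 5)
Your proposal is correct and matches the paper's intent exactly: the paper offers no separate proof of Corollary \ref{application}, introducing it only with ``Applying the results of the previous section,'' i.e.\ precisely the specialization of Theorem \ref{intercept1} that you carry out (checking \textbf{H1} and \textbf{H2} for the Beverton--Holt growth, \textbf{H1} alone for Holling--Type III, computing $\bar{P}^1=r-1$ and $\bar{P}^2=(r+\sqrt{r^2-4})/2$, and reading off the transcritical threshold $a\bar{P}^n=1$). Your added observations---that $F^{-1}(1/a)<1/a$ reduces to $a(r-1)>1$ for Beverton--Holt, and that the sign change of $f'(P)=r(1-P^2)/(1+P^2)^2$ at $P=1$ is what excludes the Model~I Holling--Type III system (\ref{hb13})--(\ref{hb14}) from the corollary---are accurate and, if anything, make the verification more explicit than the paper does.
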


\begin{corollary}\label{application2}
If $a(r-1)>1$, the systems (\ref{hb11})-(\ref{hb12}) and (\ref{hb21})-(\ref{hb22}) are uniformly persistent with respect to $(X_{11},\partial X_{11})$. \end{corollary}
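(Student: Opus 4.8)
The plan is to recognize Corollary \ref{application2} as a direct specialization of Theorem \ref{persistent}: the two systems (\ref{hb11})--(\ref{hb12}) and (\ref{hb21})--(\ref{hb22}) are precisely Model I and Model II with the Beverton--Holt growth function $F(P)=\frac{rP}{1+P}$ inserted. Since Theorem \ref{persistent} already delivers uniform persistence with respect to $(X_{11},\partial X_{11})$ whenever both \textbf{H1} and \textbf{H3} hold, the entire task reduces to verifying that these two hypotheses are satisfied for the Beverton--Holt model under the standing assumption $a(r-1)>1$.

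First I would check \textbf{H1} for $F(P)=\frac{rP}{1+P}$. One computes $F(0)=0$, $F(P)>0$ for $P>0$, $F'(P)=\frac{r}{(1+P)^2}>0$, and $\lim_{P\to+\infty}F(P)=r>0$, so all four requirements of \textbf{H1} are met. Solving the fixed point equation $P=F(P)$ yields exactly the two roots $\bar{P}^0=0$ and $\bar{P}^1=r-1$; thus $n=1$ and the smallest positive root is $\bar{P}^1=r-1$. Next I would verify \textbf{H3}: its first clause, $a\bar{P}^1>1$, is literally the hypothesis $a(r-1)>1$ of the corollary, and its second clause, $f(0)>1$, follows because $f(P)=\frac{r}{1+P}$ gives $f(0)=r$, while the assumption $a(r-1)>1$ forces $r-1>0$, hence $r>1$, so that $f(0)=r>1$. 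With both \textbf{H1} and \textbf{H3} confirmed, Theorem \ref{persistent} applies verbatim to both systems, which establishes the claim.

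There is no serious obstacle here; the argument is essentially a bookkeeping verification. The only points requiring a moment's care are, first, observing that $a(r-1)>1$ automatically guarantees $r>1$, so that $\bar{P}^1=r-1$ is genuinely positive and the condition $f(0)>1$ is not an extra assumption but a consequence; and second, confirming that the two explicit systems are indeed instances of the abstract Models I and II, so that Theorem \ref{persistent} is invoked without any modification to its proof.
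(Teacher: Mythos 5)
Your proposal is correct and matches the paper's route exactly: the paper derives Corollary \ref{application2} simply by ``applying the results of the previous section,'' i.e., by specializing Theorem \ref{persistent} to the Beverton--Holt case, and your verification that \textbf{H1} holds for $F(P)=\frac{rP}{1+P}$ and that $a(r-1)>1$ yields both clauses of \textbf{H3} (including the observation that it forces $r>1$, hence $f(0)=r>1$) fills in precisely the bookkeeping the paper leaves implicit.
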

\subsection{Periodic orbits and heteroclinic bifurcations}
The stability of the single interior equilibrium of models (\ref{hb11}) to (\ref{hb24})
depends on the values of the parameters $r$ and $a$. As the values of $r$ or $a$ increase, the interior equilibrium
goes through a Neimark-Sacker bifurcation generating an invariant cycle.

Since Model I and Model II have similar dynamics, we only focus on Model II and discuss the Beverton-Holt model (\ref{hb1}) and the Holling-Type III model (\ref{hb2}), respectively. The main difference between the Beverton-Holt model and the Holling-Type III model is that the Holling-Type III model can show a heteroclinic bifurcation where a periodic orbit grows until it becomes a heteroclinic connection between boundary equilibria whereas the Beverton-Holt model does not show such a bifurcation. Figure (\ref{het_bif_fig}) shows the heteroclinic bifurcation schematically: When $a=0.71$ and $r=2.5$, the system has a stable interior equilibrium (the dark dot that is in the middle of the figure, which is generated by the Matlab); When we increase $r$ to 3.5 and keep $a=0.71$, the system has an invariant orbit (the grey orbit in the figure, which is generated by the Matlab); However, if we continue to increase the values of $a$ or $r$, the invariant orbit disappears and the system converges to the boundary equilibrium $(0,0)$. This suggests that a heteroclinic bifurcation occurs (the dark line with arrows in the figure, which is generated schematically).

\begin{figure}[ht]
\centering
\includegraphics[width=0.8\textwidth]{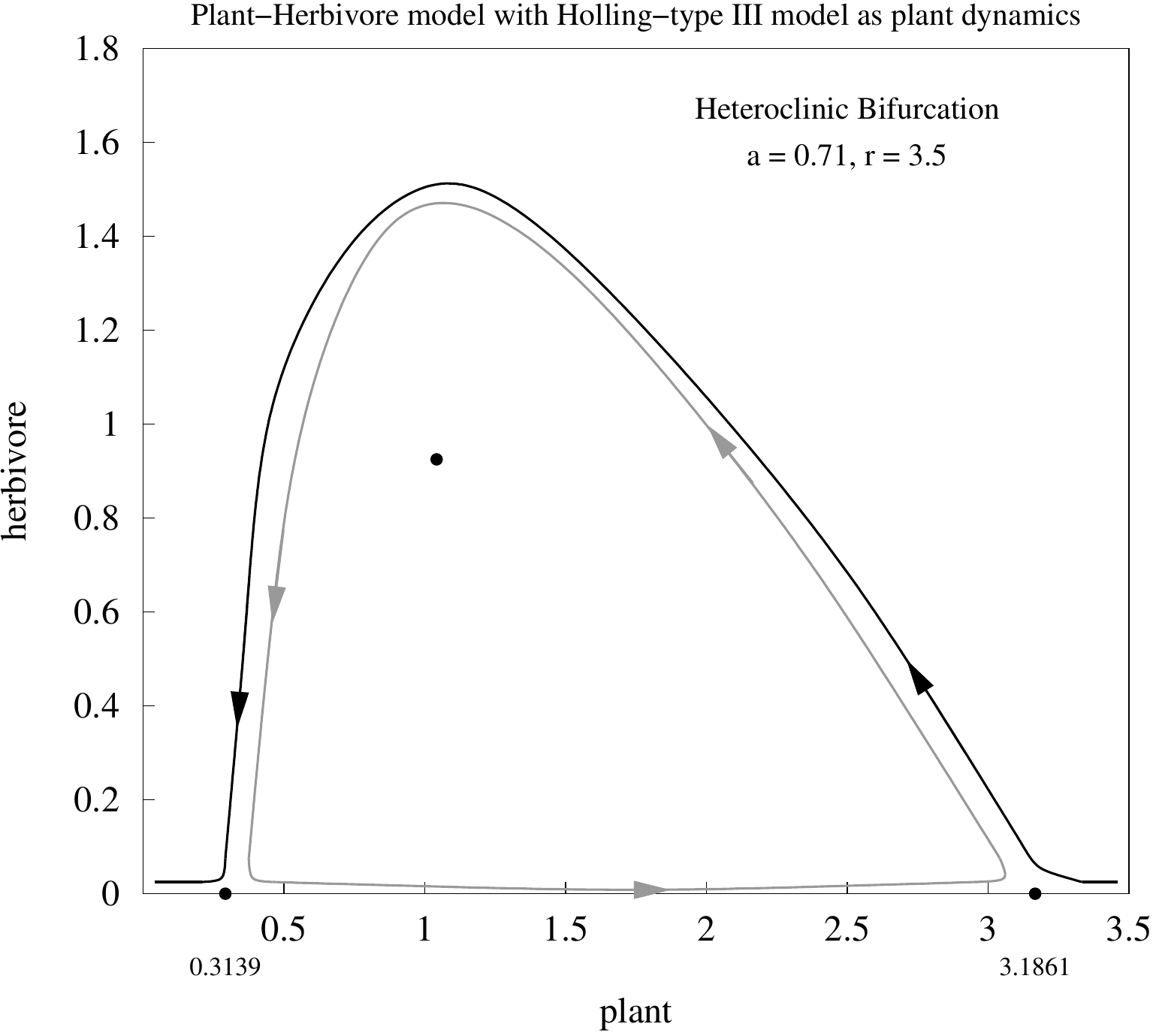}
\caption{Schematic of the heteroclinic bifurcation of Holling-Type III model happens at $a=0.71, r=3.5$.}
\label{het_bif_fig}
\end{figure}
Since the Beverton-Holt model only has the origin as a saddle and one other boundary equilibrium and since the stable manifold of the
origin is the $H$-axis which is an invariant manifold, the periodic orbit in the interior cannot become heteroclinic. However, it can become very large and
pass the origin arbitrarily close to the coordinate axes as shown in Fig. \ref{b-h_lc}. Figure \ref{b-h_lc} is the numerical simulations generated by the Matlab for 2000 generations when $a=2$ and $r=2.5,2.7,2.8,3$. When $a=2$ and $r=2.5$, the system has a stable interior equilibrium as shown in the figure (small dark dot); when $r=2,7, 2.8, 3$, the system has an invariant orbit.
\begin{figure}[ht]
\centering
\includegraphics[width=.6\textwidth]{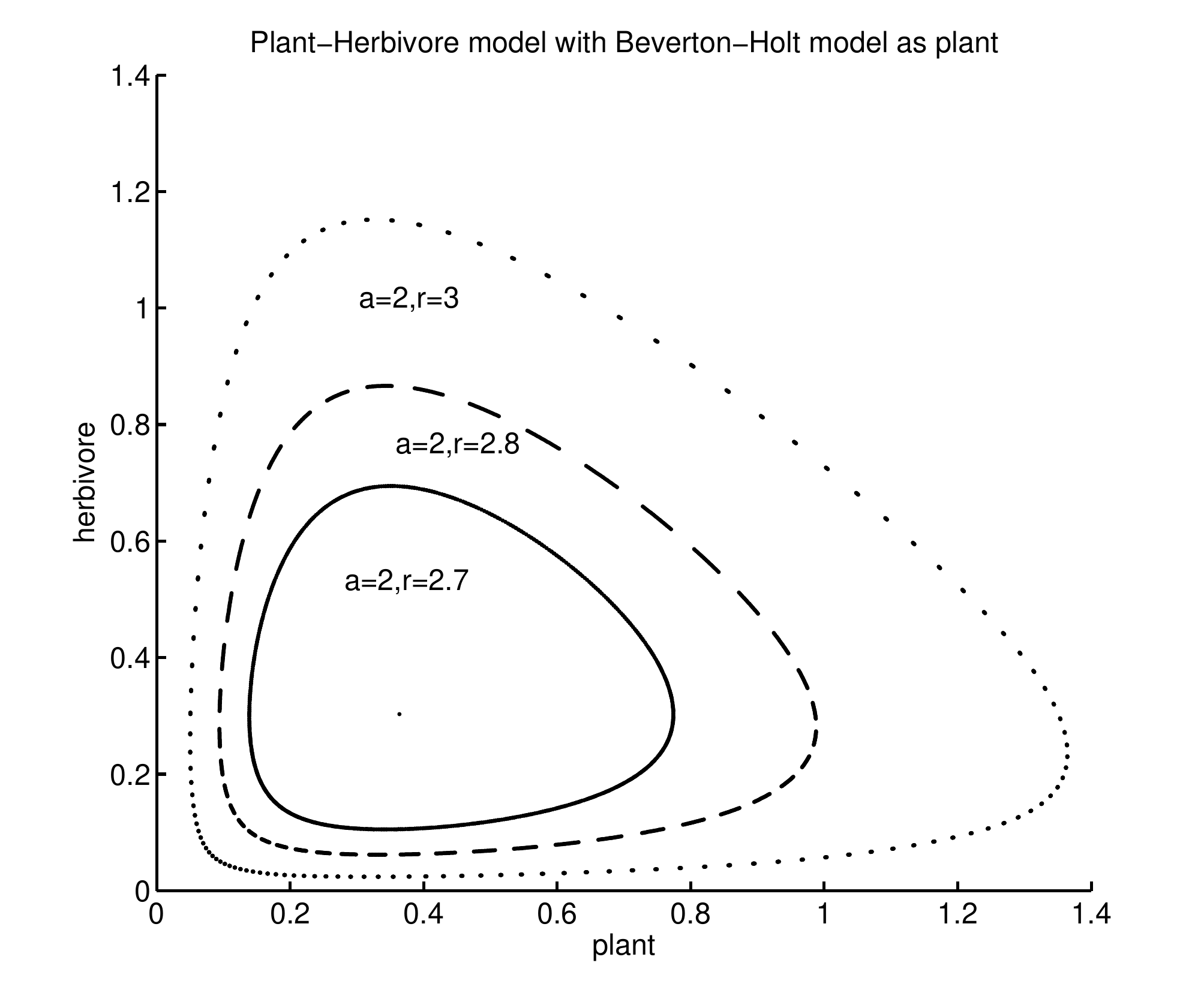}
\caption{The periodic orbit for the Beverton-Holt model when $a=2, r=2.5, 2.7, 2.8, 3$}
\label{b-h_lc}
\end{figure}
Numerical simulations of this case hint at an interesting phenomenon: {a} standard numerical simulation shows the periodic orbit disappearing
and the trajectory approaching the nontrivial boundary equilibrium as time increases. However, that boundary equilibrium is a saddle and
the trajectory should leave into the interior but it does not do so over any simulation time that we checked. The resolution of the puzzle comes
from the considerations of the accuracy of the simulations: As the limit cycle gets closer to the origin the herbivore values become so small that they
are approximated as zero. Hence the dynamics is reduced to the dynamics of the plant which has a stable equilibrium on the invariant manifold determined by
$H=0$ and hence the trajectory never leaves.

Figure \ref{monotone1}(a) shows a ``bifurcation diagram" for the
Beverton-Holt model, which describes the Neimark-Sacker bifurcation curve (dashed line) and the ``collapse curve" (solid line). The later
represents an interpolation of numerical simulations with
$a$ and $r$ values for which a standard matlab numerical precision simulation does not detect a population of the herbivore.
Figure \ref{monotone1}(b) is a bifurcation diagram for a Holling-Type III model showing interpolations of the
Neimark-Sacker bifurcation curve (dashed line) and the heteroclinic bifurcation (solid line), respectively.

\begin{figure}[ht]
\begin{center}
\subfigure [The bifurcation diagram for the Beverton-Holt model.]
{\includegraphics[width=0.44\textwidth]{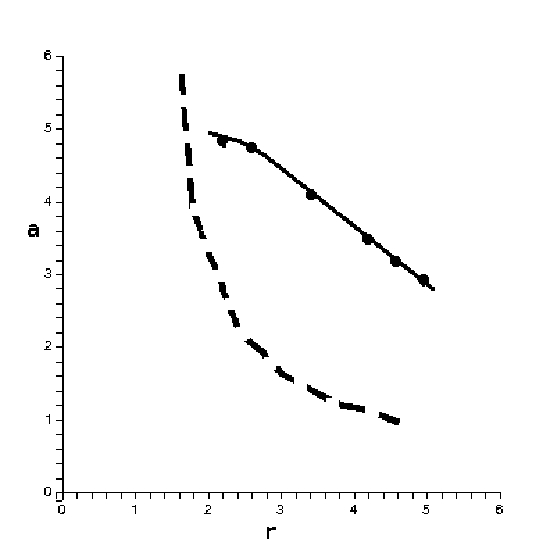}}\hspace{0.6cm}
\subfigure [The bifurcation diagram for the Holling-Type III model.]
{\includegraphics[width=0.44\textwidth]{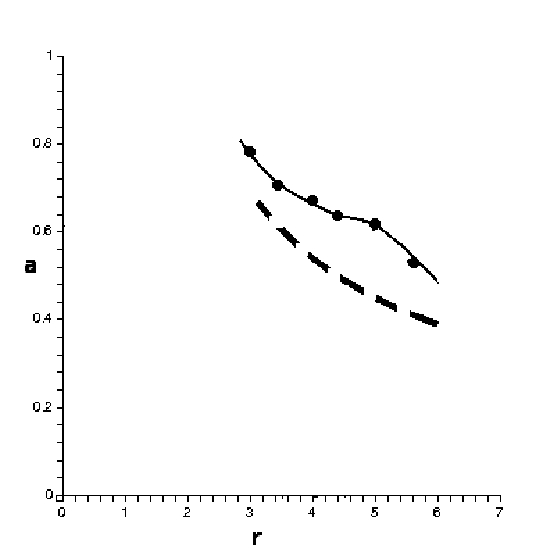}}\hspace{0.6cm}
\caption{Neimark-Sacker bifurcation and heteroclinic bifurcations}
\label{monotone1}
\end{center}
\end{figure}
\subsection{Noise-generated outbreaks}
The extreme sensitivity of the periodic orbit in the Beverton-Holt model suggests that
noise may play a much bigger role than previously discussed in the outbreaks of herbivore infestations.
Once the periodic orbit disappears due to accuracy issues, we can make it re-appear by adding small amount of noise to the simulation:
\begin{enumerate}
 \item Noise: we use positive white noise to make sure  the system stays positive, i.e., we sample from a normal distribution but discard any negative noise sample;
\item  Population of herbivores: for each generation, we add the noise to the herbivore, i.e.,
\bae\label{hnoise}
H_{t+1}&=&\frac{r P_t e^{-a H_t}}{1+P_t} + \omega R_n
\eae
where $R_n$ is a positive white noise as defined above and $\omega$ is the amplitude of the noise. See Fig. \ref{monotone_bht} for example, in this case, the amplitude of the positive  white noise is $\omega=0.01$.
\end{enumerate}
At that time the trajectories look like a randomly occurring bursting phenomenon that nevertheless has a well defined
average periodicity (see Fig. \ref{monotone_bht}).
\begin{figure}[ht]
\centering
\includegraphics[width=0.6\textwidth]{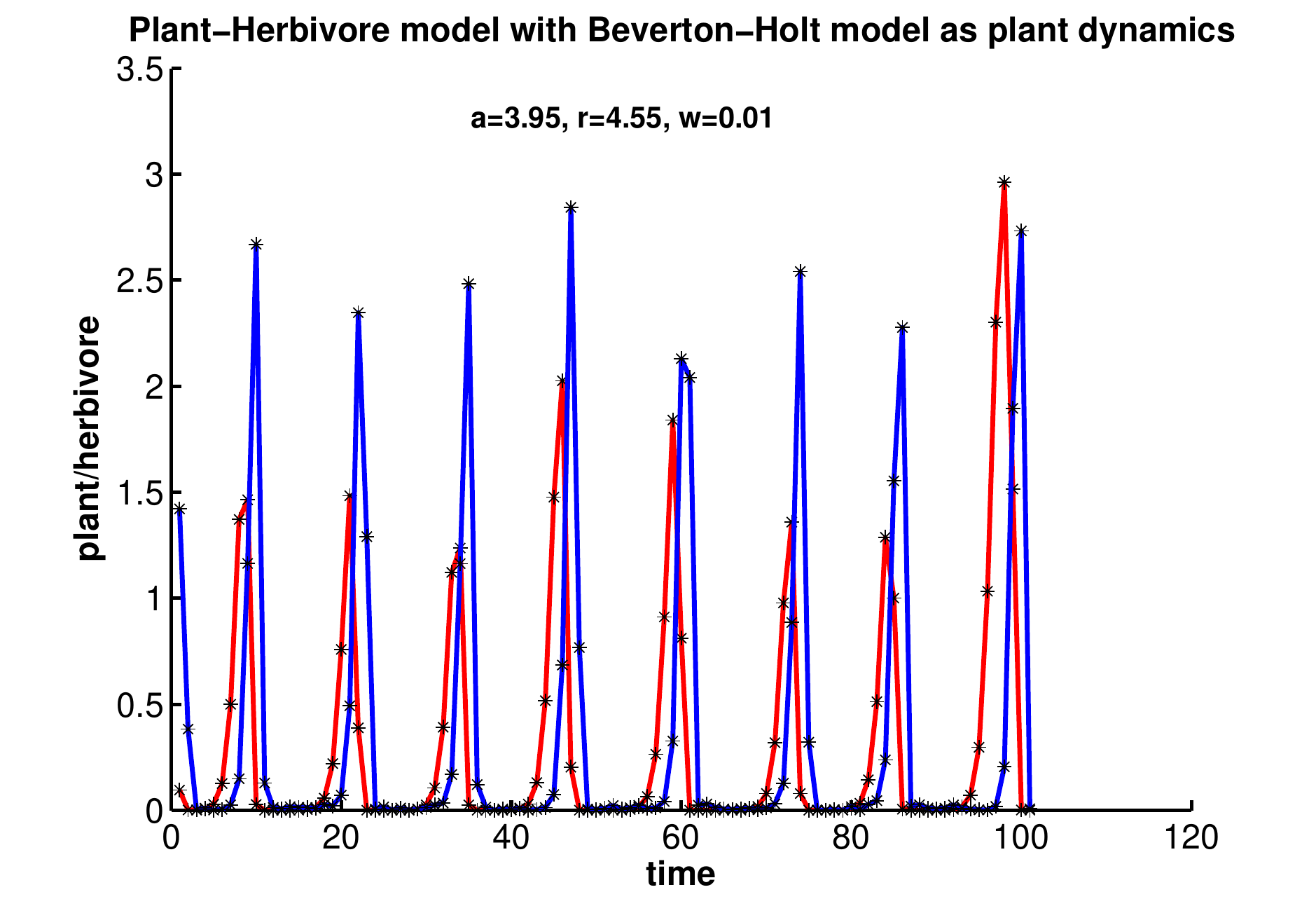}
\caption{Time series of the herbivore population for the Beverton-Holt model. The parameters are $a=3.95, r=4.55$ and a noise level of $w=0.01$}
\label{monotone_bht}
\end{figure}
Given by the exact nature of the model there will be a {\it threshold} at which the population of the herbivore cannot be detected in nature.
We define the {\it resident time} as the time interval for which the population of the herbivore stays below some threshold, e.g., 0.01 and
the {\it resident time ratio} as the ratio of the residence time to the period of the bursting.
Table \ref{t} shows the period as a function of the mean square amplitude of the noise level. Figure \ref{resident}
shows the resident time ratio as a function of the noise amplitudes. The figure is generated by calculating
the resident time ratio for each noise amplitude for 50 trajectory with 1000 generations.
The Figure shows that over many orders of magnitude the residence ratio stays around $80\%$ indicating that the
herbivore is dormant for most of the time and only appears for about $20\%$ of its periodic cycle.
The Table indicates that by choosing a particular noise level, we can control the apparent periodicity of the bursts.
\begin{figure}[ht]
\centering
\includegraphics[width=0.6\textwidth]{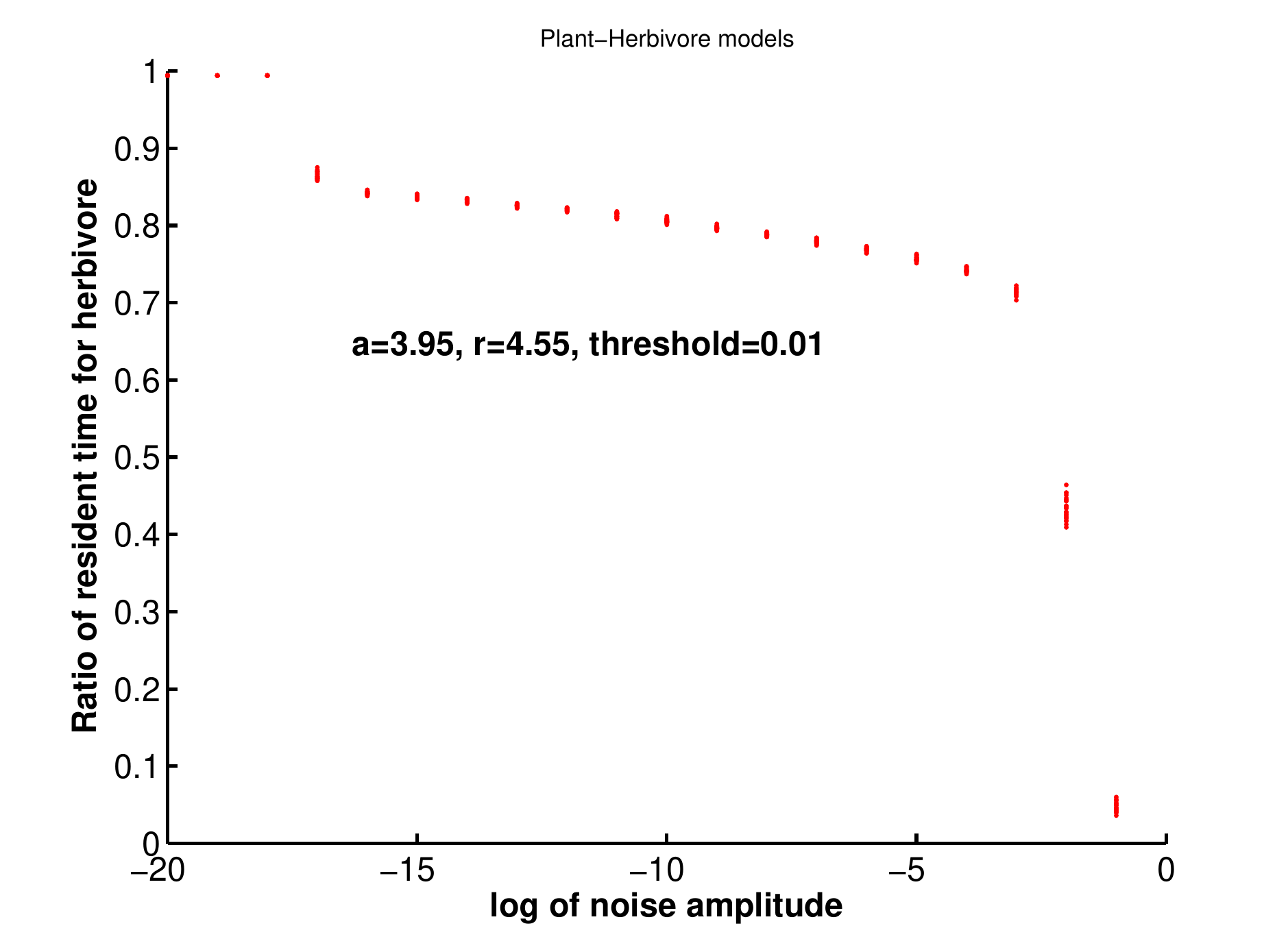}
\caption{The resident time ratio as a function of the noise amplitude when $a=3.95, r=4.55$ with a threshold of 0.01.}
\label{resident}
\end{figure}

\begin{table}[ht]\label{t}
\begin{center}
\begin{tabular}{|c|c|c|c|c|c|c|} \hline
Amplitude of Noise $w$ &0.01&0.001&0.0001&0.00001&0.000001&0.0000001\\
\hline
Period $t$& 8&10&12&14&17&19\\
\hline
\end{tabular}
\end{center}
\caption{Average period of the herbivore dynamics when $a=3.95, r=4.55$.}
\end{table}
{In particular,} time intervals of the herbivore outbreaks around $8-12$ years
can be {generated,}  which fits the ecological data for gypsy moth out-breaks \cite{Liebhold1996}.
Also, for larger noise levels, the distribution of the periods is {rather broad,}
which also seems to be happening for real data (\cite{Kendall1999}; \cite{Rinaldi2001}).


\section{Conclusions and additional features}
For most plant species it is conceivable that there is density dependent regulation of its growth. However, very few plants show periodic or strongly chaotic variation of the plant density from generation to generation. Hence it is important to determine the influence of models of monotone growth dynamics on the plant-herbivore interaction model.
 We proved three key features of such interactions that are important for model building:
\begin{itemize}
\item All monotone growth models generate a unique interior equilibrium.
\item Monotone growth models with just one sustainable equilibrium for the plant population (e.g. the Beverton-Holt model) lead to noise sensitive bursting. This certainly happens for many plant-herbivore systems and the dynamical mechanism discussed here has not been noticed before in plant-herbivore systems (however, see \cite{Rinaldi2001}).
\item Model I and II have a uniformly persistent property if they satisfy both \textbf{H1} and \textbf{H3}. In particular, the Beverton-Holt model is uniformly persistent and the Holling-Type III model is not.
\item The Beverton-Holt model does not have more complicated dynamics than a periodic orbit in the interior of the phase space. Although we cannot prove this, we conjecture that this is true for all models that satisfy the assumptions of {\bf H1} and {\bf H2}, i.e. have just one equilibrium for the pure plant dynamics.
\end{itemize}
Without any claim to a complete analysis of all types of {models,}  we note a few additional features associated with monotone and non-monotone plant growth models.\\
\begin{enumerate}
\item {\bf Bistability:}
The paper \cite{Kang2008} study plant-herbivore systems of Model II type with a Ricker model for the pure plant dynamics, also known as the modified Nicholson-Bailey model. It is shown that for a large set of parameters the system exhibits bistability between complicated (possibly chaotic) dynamics in the interior of the phase space and equally complicated dynamics on the boundary (Fig. \ref{crisis_unimodal}(a)). Kon \cite{Kon2006} discusses a similar bistability phenomenon for Model I. Since unimodal maps are all topologically equivalent \cite{Guckenheimer1979}, we expect bistability to be a defining feature for plant-herbivore models with unimodal plant growth models.

In contrast, models that satisfy the assumptions of {\bf H1} and {\bf H3}, e.g., the Beverton-Holt model cannot show bistability since the global attractor either is a fixed point on the boundary or some set in the interior of the phase space. However, it is conceivable that models that do not satisfy {\bf H3}, e.g. Holling-Type III  models, show bistability between an interior attractor and a boundary equilibrium that is not the largest equilibrium for the pure plant population.
\item {\bf Crises of Interior Attractors:}
All models seem to show some sort of global attraction to the boundary dynamics, i.e. extinction of the parasite for large growth rates $r$:
\begin{itemize}
\item Unimodal models show a crisis type of bifurcation whereas the chaotic dynamics in the interior collapses and the system becomes globally attracted to the boundary dynamics \cite{Kang2008}.
For instance the interior strange attractor in Fig. \ref{crisis_unimodal} that exists for a growth parameter of $r = 3.8$ will grow and hit the stable manifold of the boundary attractor for $r= 3.85$.
\begin{figure}[ht]
\begin{center}
\includegraphics[width=0.45\textwidth]{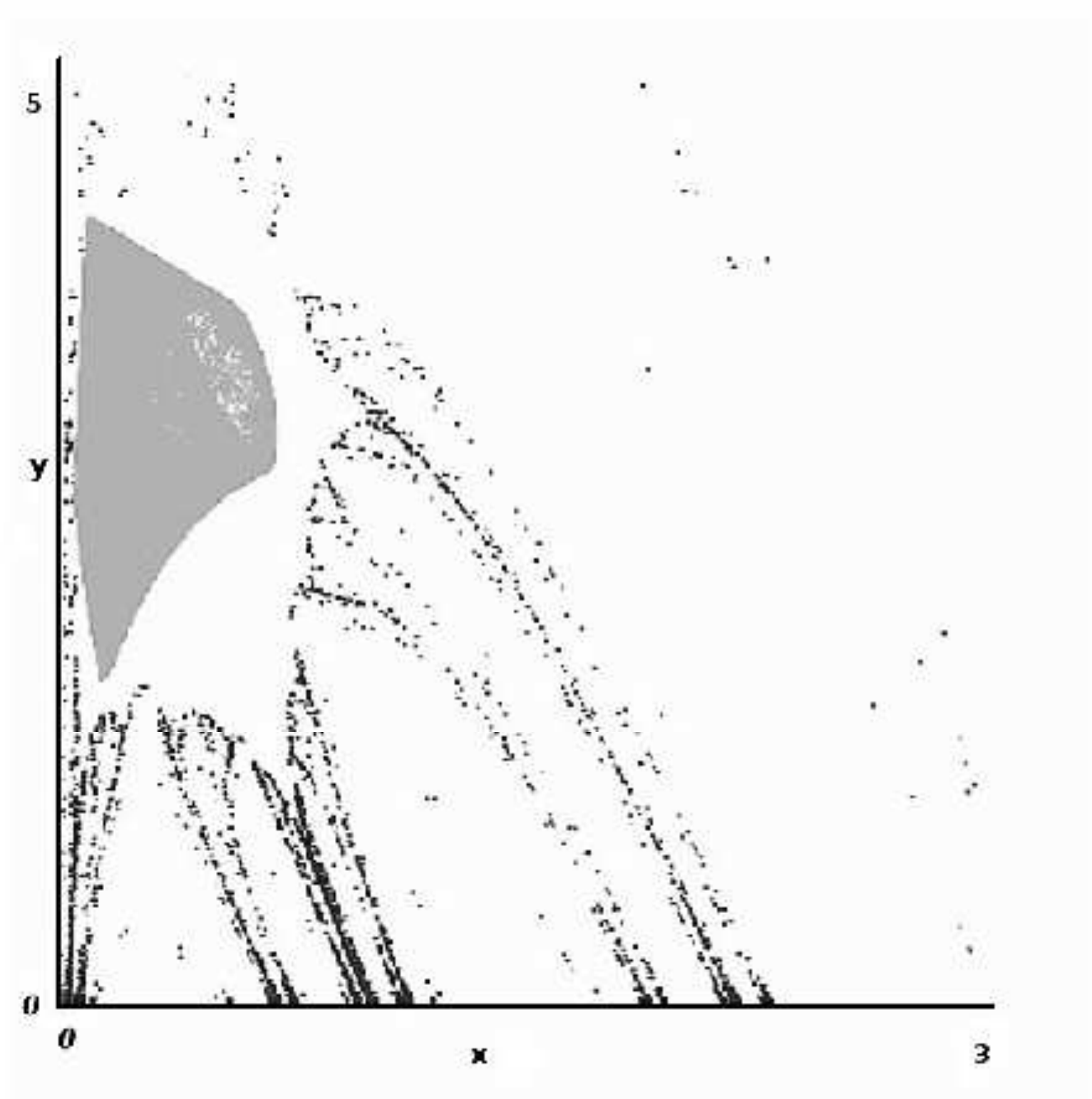}
\caption{The interior strange attractor and the stable manifold of the boundary attractor $a=0.95,r=3.8$.}
\label{crisis_unimodal}
\end{center}
\end{figure}
\item Holling-Type III models {show} a heteroclinic orbit which breaks and leads to global attraction to a boundary fixed point.
\item Conjecture: {The Beverton-Holt model does not lead to complete extinction according to Theorem (\ref{persistent}). However, $\epsilon$ in Theorem (\ref{persistent}) may happen to be small which, taking a stochastic effect into account, might lead to extinction of herbivores.}
 \end{itemize}
\end{enumerate}
\section*{Acknowledgments}
The research of D.A. is supported by NSF grant DMS-0604986. The authors would like to thank the anonymous referees for comments that helped to improve the article. We also thank Nicolas Lanchier for his help producing some of the figures. 


\bibliographystyle{plain}

\label{lastpage}

\end{document}